\newtheorem{thm}{Theorem}[section]
\newtheorem{cor}[thm]{Corollary}
\newtheorem{lem}[thm]{Lemma}
\newtheorem{prop}[thm]{Proposition}
\theoremstyle{definition}
\newtheorem{defn}[thm]{Definition}
\theoremstyle{remark}
\newtheorem{rem}[thm]{Remark}
\numberwithin{equation}{section}
\newcommand{\set}[1]{\left\{#1\right\}}
\newcommand{\Real}{\mathbb R}
\newcommand{\func}[1]{\ensuremath{\mathrm{#1} \:} }
\newcommand{\dist}[0]{\mathrm{dist}}
\newcommand{\re}[0]{\func{Re}}
\newcommand{\im}[0]{\func{Im}}
\newcommand{\inj}[0]{\func{inj}}
\title{Compactness of the space of genus-one helicoids}
\author{Jacob Bernstein and Christine Breiner}
\address{Dept. of Math, Massachusetts Institute of
Technology, Cambridge, MA  02139, USA}
\email{jbern@math.mit.edu}
\address{Dept. of Math,
Johns Hopkins University, Baltimore, MD 21218, USA}
\email{cbreiner@math.jhu.edu}
\thanks{The first author was supported in part by NSF grant DMS 0606629}
\begin{document}
\begin{abstract}
Using the lamination theory developed by Colding and Minicozzi for
sequences of embedded, finite genus minimal surfaces with boundaries
going to infinity \cite{CM5},  we show that the space of genus-one
helicoids is compact  (modulo rigid motions and homotheties).  This
generalizes a result of Hoffman and White \cite{HW}.
\end{abstract}
\maketitle
\section{Introduction}
We investigate the finer geometric
structure of complete,
one-ended, genus-$g$ minimal surfaces embedded in $\Real^3$.  Recall
that Corollary 1.3 in \cite{BB2} determined that every such surface, the space of which we denoted by
$\mathcal{E}(1,g)$, is asymptotic to a helicoid and hence the
terminology ``genus-$g$ helicoid'' is warranted.  We approach this by showing certain compactness properties for
$\mathcal{E}(1,g)$ which will allow us to further bound the geometry
of genus-$g$ helicoids.  For technical reasons, it is much simpler
to treat the case $g=1$, which we do in this paper.  The compactness
properties of $\mathcal{E}(1,g)$ for $g\geq 2$ will be discussed in
\cite{BBCompLam}.

Our main result shows that (after suitably normalizing) the space of
genus-one helicoids is compact.  This generalizes a result of
Hoffman and White \cite{HW} in which they show compactness for the
space of symmetric genus-one helicoids (i.e. genus-one helicoids
that contain two coordinate axes).  Our result establishes a
connection between the asymptotic geometry of a genus-one helicoid
and the geometry of the handle.  Whether such compactness holds in
$\mathcal{E}(1,g)$ when $g\geq 2$ is still unclear.

\begin{thm} \label{MainCpctnessThm}
Let $\Sigma_i\in \mathcal{E}(1,1)$ with each
$\Sigma_i$ asymptotic to $H$, a fixed helicoid. Then, a
sub-sequence converges uniformly in $C^\infty$ on
compact subsets of $\Real^3$ with multiplicity one to $\Sigma_\infty\in
\mathcal{E}(1,1)\cup \set{{H}}$ with $\Sigma_\infty$
asymptotic to (or equaling) $H$.
\end{thm}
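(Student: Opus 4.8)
\emph{Proof proposal.} The plan is to extract a convergent subsequence by appealing to the Colding--Minicozzi lamination theory of \cite{CM5}, and then to identify the limit using the hypothesis that every $\Sigma_i$ is asymptotic to the \emph{same} fixed helicoid $H$. This asymptotic normalization is what fixes the scale and position, making an honest (rather than merely ``modulo rigid motions and homotheties'') compactness statement possible. First I would record what the asymptotic hypothesis buys us away from a compact set: for each $\Sigma_i$ there is a radius $R_i$ so that $\Sigma_i \setminus B_{R_i}$ is a small graph over $H$, and on such regions the geometry (in particular $|A|^2$) is uniformly controlled by that of $H$. The danger is therefore entirely interior, and in fact concentrated near the axis of $H$, where $H$ is most sharply sheeted and where the handle of any $\Sigma_i \in \mathcal{E}(1,1)$ must sit.

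Applying the compactness theorem of \cite{CM5} to the sequence (the surfaces are embedded, have genus one and hence uniformly bounded genus, and may be exhausted by compact pieces with boundary going to infinity), I would pass to a subsequence converging to a minimal lamination $\mathcal{L}$ of $\Real^3$, with the convergence smooth and of some multiplicity away from a closed singular set $\mathcal{S}$. By the previous paragraph $\mathcal{L}$ coincides with $H$ outside a compact set and $\mathcal{S}$ is compact. The heart of the argument is then to establish the curvature bound
\begin{equation}
\sup_i \, \sup_{\Sigma_i \cap B_R} |A|^2 < \infty \quad \text{for every } R > 0,
\end{equation}
equivalently that $\mathcal{S} = \emptyset$ and the convergence is multiplicity one.

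This curvature estimate is the main obstacle. I would argue by contradiction: if $|A|$ blows up along points $p_i \to p_0 \in \mathcal{S}$, rescale by the curvature scale and invoke the blow-up analysis of \cite{CM5}. Because the genus is a fixed finite number, at most finitely many scales can carry a handle, so after rescaling the blow-up limit has genus zero and is either a nonflat helicoid or a degenerate foliation by planes with a singular line. Either alternative manufactures a \emph{second} spiraling axis at $p_0$, on top of the axis supplied by the fixed asymptotic helicoid $H$; I would rule this out by combining the one-sided curvature estimate and the chord--arc bounds of Colding--Minicozzi with the genus-one hypothesis, which together do not permit two independent helicoidal regions in a surface of $\mathcal{E}(1,1)$ asymptotic to a single helicoid. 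The delicate point is that the one-sided curvature estimate degenerates precisely \emph{on} the axis of $H$, so the region along the axis --- where the handle lives --- must be treated by a separate argument exploiting the fixed asymptotics of $H$ rather than a naive graphical comparison; this is where I expect the real work to lie.

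Granting the curvature bound, standard elliptic estimates together with embeddedness and the topological frame provided by $H$ give smooth subconvergence with multiplicity one to a complete, embedded minimal surface $\Sigma_\infty$. Passing the graphical asymptotics to the limit shows $\Sigma_\infty$ is one-ended and asymptotic to (or equal to) $H$, and by the classification of \cite{BB2} its genus is at most one. Finally I would settle the dichotomy in the conclusion by tracking the handle: if the handles of the $\Sigma_i$ remain in a fixed compact set, then smooth multiplicity-one convergence preserves the topology there and $\Sigma_\infty \in \mathcal{E}(1,1)$; if instead the handles escape to infinity (the model case being a sequence of screw-motion translates of a fixed genus-one helicoid), the limit has genus zero, and being a complete, embedded, one-ended, genus-zero minimal surface asymptotic to $H$ it must equal $H$ by the uniqueness of the helicoid in that class. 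In either case $\Sigma_\infty \in \mathcal{E}(1,1) \cup \set{H}$, as claimed.
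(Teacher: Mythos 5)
Your proposal has a genuine gap, and it sits exactly where you admit you ``expect the real work to lie.'' The foundational claim in your first paragraph --- that the asymptotic hypothesis gives uniform control outside a compact set, so that the limit lamination coincides with $H$ outside a compact set and the singular set is compact --- is unjustified: each $\Sigma_i$ is graphical over $H$ outside some ball $B_{R_i}$, but nothing makes the $R_i$ uniform in $i$. In particular, nothing in your sketch prevents the genus from collapsing ($r(\Sigma_i)\to 0$, which forces curvature blow-up) or from drifting arbitrarily far from the axis of $H$; controlling exactly these two degenerations is the real content of the theorem. The paper's mechanism for converting the non-uniform asymptotic hypothesis into uniform bounds is entirely absent from your proposal: after normalizing the genus to scale $1$ at the origin, one writes $f_i=\log g_i = \sqrt{-1}\,\lambda_i z_i + F_i$ on the annular end and evaluates residues on a \emph{fixed} circle $\gamma$ in the $z$-plane, where smooth convergence on compact subsets is actually usable. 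The identity $\int_\gamma f_i(u)\,du/u^2 = -2\pi\lambda_i$ forces $\lambda_i \to \lambda_\infty > 0$, and since every $\Sigma_i$ is asymptotic to the \emph{same} $H$ this pinches the genus scale, $1/C_1 \le r(\Sigma_i) \le C_1$; the Laurent coefficients of $F_i$, recovered by similar contour integrals, then confine the genus center to a fixed cylinder about the axis. Only after this lemma (the one immediately preceding the paper's proof of Theorem \ref{MainCpctnessThm}) can one translate along the axis and invoke Theorem \ref{CpctnessCorg1}, which already contains the curvature bound and multiplicity-one convergence that you attempt to re-derive by a blow-up argument you concede you cannot complete near the axis.

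The same gap recurs at the end of your proposal: you assert that ``passing the graphical asymptotics to the limit'' shows $\Sigma_\infty$ is asymptotic to (or equals) $H$. Smooth convergence on compact subsets of $\Real^3$ does not by itself transfer any information about ends to the limit; the limit in $\mathcal{E}(1,1)$ is asymptotic to \emph{some} helicoid by Corollary 1.3 of \cite{BB2}, and identifying that helicoid as $H$ is again done in the paper via the residue integral \eqref{ScaleIntegral} on a fixed circle. Your closing dichotomy (handles stay in a compact set versus handles escape) is the right picture and matches the paper's dichotomy $\lim_{i\to\infty}|v_i|<\infty$ versus $\lim_{i\to\infty}|v_i|=\infty$ for the translations $v_i$ along the axis, but it only becomes meaningful once the uniform bounds on the scale and position of the genus have been established --- which is precisely the step your proposal is missing.
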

\begin{rem}
Translations along the axis show how $H$ may occur as a limit.
\end{rem}
To prove this, we develop a more general compactness result used
also in \cite{BBCompLam}.  We consider a sequence
$\Sigma_i\in\mathcal{E}(1,g,R_i)$, where $R_i\to \infty$, that has
uniform extrinsic control on the position and size of the genus.
Here $\mathcal{E}(e,g,R)$ denotes the set of smooth, connected,
properly embedded minimal surfaces, $\Sigma\subset\Real^3$, so that
$\Sigma$ has genus $g$ and $\partial \Sigma \subset
\partial B_R(0)$ is smooth and has $e$ components. We
show the sequence $\Sigma_i$ has uniformly bounded curvature and so
contain a convergent sub-sequence.
\begin{thm} \label{CpctnessCorg1}
Suppose $\Sigma_i \in \mathcal{E}(1,g,R_i)$ are such that
$1=r_-(\Sigma_i)\geq \alpha r_+(\Sigma_i)$, the genus of each
$\Sigma_i$ is centered at $0$ and $R_i\to \infty$.  Then a
sub-sequence of the $\Sigma_i$ converges uniformly in $C^\infty$ on
compact subsets of $\Real^3$ and with multiplicity one to a surface
$\Sigma_\infty\in \mathcal{E}(1,g)$ and
$1=r_-(\Sigma_\infty)\geq \alpha r_+(\Sigma_\infty)$.
\end{thm}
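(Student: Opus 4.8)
The plan is to invoke the Colding--Minicozzi lamination theory of \cite{CM5} to obtain a structural dichotomy for the sequence, and then to use the normalization $1 = r_-(\Sigma_i) \geq \alpha r_+(\Sigma_i)$ together with the centering of the genus at $0$ to rule out the degenerate alternative. Since each $\Sigma_i$ has genus $g$ (uniformly bounded), is embedded, and has boundary $\partial\Sigma_i \subset \partial B_{R_i}(0)$ with $R_i\to\infty$, the lamination theory applies and yields, after passing to a subsequence, one of two possibilities: either (i) the $\Sigma_i$ have curvature uniformly bounded on compact subsets of $\Real^3$, or (ii) curvature blows up along a nonempty singular set $\mathcal{S}$, away from which the $\Sigma_i$ converge to a minimal lamination of $\Real^3\setminus\mathcal{S}$ by parallel planes, with the surfaces forming double-spiral-staircases (degenerate helicoids) near $\mathcal{S}$.

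In case (i) the conclusion follows from standard arguments. Uniform curvature bounds, embeddedness, and minimality let me write the $\Sigma_i$ locally as graphs of solutions to the minimal surface equation with uniform $C^{2,\gamma}$ bounds; Schauder estimates and Arzel\`a--Ascoli then produce a subsequence converging in $C^\infty$ on compact subsets to a smooth, properly embedded minimal surface $\Sigma_\infty$, which is complete and boundaryless in $\Real^3$ because $R_i\to\infty$. I then verify $\Sigma_\infty\in\mathcal{E}(1,g)$: the confinement of the genus to the fixed annular region $\overline{B_{1/\alpha}(0)}\setminus B_1(0)$ (guaranteed by $1=r_-\geq\alpha r_+$ and the centering) prevents genus from being lost to infinity or concentrating at a point, so by the behavior of genus under smooth, multiplicity-one convergence the limit has genus exactly $g$; connectedness and one-endedness (hence the asymptotically helicoidal structure of \cite{BB2}) are inherited in the limit. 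The normalization passes to $\Sigma_\infty$ by continuity of $r_\pm$ under $C^\infty$ convergence, giving $1=r_-(\Sigma_\infty)\geq\alpha r_+(\Sigma_\infty)$, and multiplicity one is forced by embeddedness together with the persistence of the handle at the fixed scale, which precludes collapsing of sheets.

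The main obstacle is to exclude case (ii). Here I would argue that the normalization provides a fixed, finite geometric scale for the handle that is incompatible with the sheeting produced by a curvature blow-up. Indeed, $r_-(\Sigma_i)=1$ says there is no genus inside $B_1(0)$ while genus is present immediately outside, and $r_+(\Sigma_i)\leq 1/\alpha$ traps all of the genus in $\overline{B_{1/\alpha}(0)}$; thus each $\Sigma_i$ carries a definite, noncollapsed handle in the fixed annulus $\overline{B_{1/\alpha}(0)}\setminus B_1(0)$. If curvature were to blow up, the singular set $\mathcal{S}$ together with the Colding--Minicozzi one-sided curvature estimate would force the $\Sigma_i$ to be locally sheeted near $\mathcal{S}$ --- that is, close to a foliation by planes, and in particular genus-free there. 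Since the handle can neither escape to infinity (being trapped by $r_+\leq 1/\alpha$) nor collapse to a point (its inner scale being pinned at $r_-=1$), this genus must persist in the fixed annulus, and a definite amount of genus at a fixed scale cannot coexist with the uniform sheeting of a plane-lamination limit, contradicting (ii). Making precise the assertion that ``genus at a fixed scale obstructs sheeting'' is the technical heart of the argument, which I expect to rely on the one-sided curvature estimate and the no-mixing principle for genus and spiraling from \cite{CM5}. Once (ii) is excluded, only case (i) occurs and the theorem follows.
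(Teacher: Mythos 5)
Your overall strategy---invoke the Colding--Minicozzi lamination theory to get a dichotomy and use the normalization $1=r_-(\Sigma_i)\geq\alpha r_+(\Sigma_i)$ to exclude curvature blow-up---is indeed the paper's strategy, but the step you yourself flag as the technical heart is genuinely missing, and the mechanism you sketch for it would fail. The claim that ``a definite amount of genus at a fixed scale cannot coexist with the uniform sheeting of a plane-lamination limit'' is not a sound local statement, because genus is not a local feature: a priori, a handle of extrinsic size one could be formed by two catenoidal necks of size $\epsilon_i\to 0$ joining two large, nearly flat sheets. Such a configuration is locally genus-zero everywhere (sheets and necks), is compatible with $r_-=1$ and $r_+\leq\alpha^{-1}$ and with the genus centered at $0$, and yet has curvature blowing up at the necks; its singular points would lie in $\mathcal{S}_{neck}$ (Definition \ref{NeckDef}), near which the limit is exactly the sheeted plane-lamination picture you describe. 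So trapped genus does not by itself obstruct sheeting, and your appeal to the one-sided curvature estimate does not engage this scenario. What the paper actually does (Lemma \ref{injectivelb}) is rule out precisely this neck-type degeneration: if it occurred, the no-mixing theorem of \cite{CM5} forces the entire singular set to be neck-type and hence confined to $B_{\alpha^{-1}}(0)$ (away from the genus, small balls meet $\Sigma_i$ in disks), so the limit lamination lies in a fixed slab; rescaling the sequence by $1/k$ then collapses the genus and produces a ULSC singular point at $0$, where Theorem 0.9 of \cite{CM5} would force the limit to foliate all of $\Real^3$, contradicting the slab confinement. Separately, even after necks are excluded, helicoid-type blow-up is still possible for ULSC sequences (blow-downs of helicoids are ULSC with unbounded curvature) and must be ruled out by a second argument: the paper uses the genus only to obtain a point of bounded injectivity radius (Corollary \ref{InjectiveubCor}), hence short non-contractible closed geodesics; Lemma \ref{GeoInThinTubeLem} forces these geodesics to collapse onto the singular line, and the one-sided curvature estimate together with the weak chord-arc bounds of \cite{CY} then give incompatible upper and lower bounds on the intrinsic distance between two boundary points of an arc of the geodesic. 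Your proposal contains no counterpart to either of these two arguments; naming the one-sided estimate and no-mixing as expected ingredients does not supply them.

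A secondary gap: your justification of multiplicity one (``embeddedness together with the persistence of the handle \dots precludes collapsing of sheets'') is not the right mechanism---embedded sequences can converge with higher multiplicity, and handle persistence does not prevent extra sheets from accumulating on the limit. The paper's argument (inside the proof of Theorem \ref{WeakCompThmInt}, following Appendix B of \cite{CM5}) is that higher multiplicity would produce a positive Jacobi function on $\Sigma_\infty$, forcing stability by \cite{FSC} and hence forcing $\Sigma_\infty$ to be a plane by \cite{SchoenStab}, which is impossible since the injectivity radius upper bound makes $\Sigma_\infty$ non-simply connected. Your case (i) analysis (Schauder, Arzel\`a--Ascoli, genus trapping to show $\Sigma_\infty\in\mathcal{E}(1,g)$ with the same normalization) is in line with the paper, but without the two blow-up exclusions above and a correct multiplicity argument the proof does not go through.
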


For the meaning of $r_+(\Sigma), r_-(\Sigma)$, see Definitions
\ref{outerScaleDef} and \ref{innerScaleDef}. Roughly, $r_+(\Sigma)$
measures the extrinsic spread of the handles of $\Sigma$, while
$r_-(\Sigma)$ measures the size of the smallest handle. When $g=1$,
as $r_-(\Sigma)=r_+(\Sigma)$,  rescaling arguments make Theorem
\ref{CpctnessCorg1} particularly strong. For example, it allows one
to give a description of elements of $\mathcal{E}(1,1,R)$ which have
$r(\Sigma)$ small relative to $R$. This is Theorem \ref{effresult},
which says such a surface is close, in a Lipschitz sense, to a piece
of a genus-one helicoid. This is comparable to the description of
the shape of an embedded minimal disks near a point of large
curvature given by Theorem 1.1 in \cite{BB1}.

The proof of Theorem \ref{CpctnessCorg1} uses extensively the
structural theory for embedded minimal surfaces of Colding and
Minicozzi \cite{CM5,CM1,CM2,CM3,CM4}.  In particular, we make use of
three important consequences of their work: the one-sided curvature
estimates -- Theorem 0.2 of \cite{CM4}; the chord-arc bounds for
minimal disks-- Proposition 1.1 of \cite{CY}; and, most importantly,
their lamination theory for finite genus surfaces -- Theorem 0.9 of
\cite{CM5}. As our proof depends most critically on this last
result, we discuss it in some detail in Appendix \ref{CM5Sec} below.
In particular, we summarize Theorem 0.9 of \cite{CM5} as Theorem
\ref{CM509Thm}.

Let us briefly outline the structure of the paper.   In Section
\ref{WeakCompSec}, we  prove uniform curvature bounds for sequences
of minimal surfaces from a larger class than that of Theorem
\ref{CpctnessCorg1}.  These surfaces have uniformly finite topology
normalized so that, intrinsically, the topology does not concentrate
or disappear. As a consequence, we deduce smooth sub-sequential
convergence to a complete, embedded minimal surface in $\Real^3$.
This result is more general than Theorem \ref{CpctnessCorg1}, and
consequently there are more possibilities for the limit surfaces.
While it is implicit in work of Meeks, Perez and Ros
\cite{MPR3,MPR4}, we give our own proof:
\begin{thm} \label{WeakCompThmInt}
Let $\Sigma_i\in \mathcal{E}(e,g,R_i)$ ($e,g\geq 1$) be such that
$0\in \Sigma_i$, $\inj_{\Sigma_i}\geq 1$, $\inj_{\Sigma_i}(0)\leq
\Delta$ and $R_i\to \infty$. Then a sub-sequence of the $\Sigma_i$
converge smoothly on compact subsets of $\Real^3$ with multiplicity
one to a non-simply connected minimal surface in $\cup_{1\leq k\leq
e+g, 0\leq l \leq g} \mathcal{E}(k,l)$.
\end{thm}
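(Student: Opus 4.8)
The plan is to reduce everything to a single uniform curvature estimate: I claim that $\sup_{\Sigma_i\cap B_\rho(0)}\abs{A_{\Sigma_i}}\leq C(\rho)$ for every fixed $\rho>0$, with $C(\rho)$ independent of $i$. Granting this, on any ball $B_\rho$ the boundaries $\partial\Sigma_i\subset\partial B_{R_i}$ have left $B_\rho$ once $R_i>\rho$, so the $\Sigma_i\cap B_\rho$ are boundaryless minimal surfaces with uniformly bounded curvature; standard compactness (Arzel\`a--Ascoli applied to the local graphical representations, together with elliptic estimates) yields smooth subsequential convergence on $B_\rho$, and a diagonal argument over $\rho\to\infty$ produces a complete, embedded minimal limit $\Sigma_\infty\subset\Real^3$. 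Thus the two real tasks are the curvature estimate and the identification of $\Sigma_\infty$, and the two injectivity hypotheses play complementary roles: the global bound $\inj_{\Sigma_i}\geq 1$ forbids topology from collapsing, while $\inj_{\Sigma_i}(0)\leq\Delta$ forces a definite amount of topology to persist near the origin.

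The geometric object driving both tasks is an essential geodesic loop at a controlled scale. Since $\Sigma_i$ is minimal its Gauss curvature satisfies $K=-\tfrac12\abs{A}^2\leq 0$, so $\exp_0$ has no conjugate points and the bound $\inj_{\Sigma_i}(0)\leq\Delta$ produces a geodesic loop $\gamma_i$ based at $0$ of intrinsic length in $[2,2\Delta]$ (the lower bound coming from $\inj_{\Sigma_i}\geq 1$). A Gauss--Bonnet computation, using $K\leq 0$, shows $\gamma_i$ cannot bound a disk, so it is homotopically essential; being connected of intrinsic diameter $\leq 2\Delta$ and containing $0$, it lies in $B_{2\Delta}(0)\subset\Real^3$. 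This is a non-contractible curve of definite length sitting in a fixed extrinsic ball.

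For the curvature estimate I argue by contradiction and invoke the Colding--Minicozzi lamination theory in the form of Theorem \ref{CM509Thm}: if $\sup_{B_\rho}\abs{A_{\Sigma_i}}\to\infty$ then, after passing to a subsequence, the $\Sigma_i$ converge to a foliation of $\Real^3$ by parallel planes, smoothly away from a singular set $\mathcal{S}$ on which the curvature concentrates. I rule this out in two cases. If genus concentrates at a point of $\mathcal{S}$, rescaling by the blow-up factor $\lambda_i\to\infty$ yields a complete embedded minimal surface of positive genus, hence an essential loop of bounded rescaled length; undoing the scaling gives an essential loop in $\Sigma_i$ of length tending to $0$, contradicting $\inj_{\Sigma_i}\geq 1$. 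In the remaining (uniformly simply connected) case the surfaces resemble helicoids near $\mathcal{S}$ and are graphical, hence locally a disjoint union of disks, away from it; either way a large enough portion of $\Sigma_i$ in $B_{2\Delta+1}(0)$ is simply connected, so the essential loop $\gamma_i$ would have to bound a disk, which is impossible. Hence the curvature is bounded. I expect this dichotomy, and in particular ruling out the helicoid-like (uniformly simply connected) degeneration using only the loop $\gamma_i$, to be the main obstacle; the one-sided curvature estimate of \cite{CM4} and the chord-arc bound of \cite{CY} are the tools that make the local graphical/helicoidal description near $\mathcal{S}$ precise enough to locate $\gamma_i$.

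It remains to identify $\Sigma_\infty$. Multiplicity one follows from $\inj_{\Sigma_i}\geq 1$: two sheets converging to the same limit would have to be joined, within the bounded-curvature region, by a neck that pinches as the sheets coalesce, producing arbitrarily short essential loops and again contradicting the injectivity bound. The loops $\gamma_i$ converge (using the curvature bound) to a geodesic loop $\gamma_\infty\subset\Sigma_\infty$ of length $\geq 2$ based at $0$, still essential, so $\Sigma_\infty$ is non-simply connected. Finally $\Sigma_\infty$ is a complete embedded minimal surface of finite genus, hence, by the structure theory for such surfaces, properly embedded with finitely many ends; since handles can only be lost in the limit (collapse being precluded by $\inj_{\Sigma_i}\geq 1$) its genus $l$ satisfies $0\leq l\leq g$, and a count of the ends --- which arise from the $e$ boundary components pushed to infinity together with at most $g$ handles that open up --- gives $1\leq k\leq e+g$. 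Thus $\Sigma_\infty\in\bigcup_{1\leq k\leq e+g,\,0\leq l\leq g}\mathcal{E}(k,l)$ and is non-simply connected, as claimed.
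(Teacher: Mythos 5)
Your overall skeleton---uniform curvature bounds proved by contradiction against the Colding--Minicozzi lamination theorem, using an essential loop of bounded length produced by the two injectivity hypotheses, followed by Arzel\`a--Ascoli and identification of the limit---is the same as the paper's. But several of your steps have genuine gaps, and they occur exactly at the hard points.

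First, and most seriously, the step that kills the helicoid-like (ULSC) degeneration is asserted rather than proved. You claim that ``a large enough portion of $\Sigma_i$ in $B_{2\Delta+1}(0)$ is simply connected, so the essential loop $\gamma_i$ would have to bound a disk.'' This is precisely what needs to be shown, and it does not follow from the $C^\alpha$ convergence or from the multivalued-graph description in Theorem \ref{CM509Thm}: that description is qualitative and does not by itself imply that the component of $\Sigma_i\cap B_{2\Delta+1}(0)$ containing $\gamma_i$ is a disk. Worse, the claim is false when the singular set consists of two lines: by Remark \ref{twosingremark} (degeneration modeled on the Riemann examples), loops encircling both lines close up and are not null-homotopic, so the two-line case must be excluded first; the paper does this by counting boundary components via Lemma \ref{BndryCompBndProp}. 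Even in the one-line case the paper never proves simple connectivity; instead it shows that closed geodesics must collapse onto the singular line (Lemma \ref{GeoInThinTubeLem}) and then derives a contradiction at the lowest point of the geodesic using the one-sided curvature estimate of \cite{CM4} together with the weak chord-arc bounds of \cite{CY}. That chain of lemmas is the bulk of the proof and has no counterpart in your write-up. Relatedly, you invoke the foliation conclusion of Theorem \ref{CM509Thm} without verifying its hypothesis that the sequence is ULSC. The paper gets ULSC directly from $\inj_{\Sigma_i}\geq 1$ plus the chord-arc bounds of \cite{CY}; your substitute---blowing up a point where ``genus concentrates'' to extract a complete limit of positive genus---is circular (extracting that limit requires exactly the compactness being proved), and it also misses that neck-type degeneration does not require genus at all (necks can form from the $e$ boundary components or in a genus-zero configuration).

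Second, two points in the compactness/identification step fail as stated. Bounded curvature alone does not yield subsequential convergence to a surface with finite multiplicity: one needs uniform local area bounds (the paper quotes Lemma C.3 of \cite{CM5}); without them the limit may be approached with unbounded multiplicity. And your multiplicity-one argument is incorrect: if two sheets converge to the same limit, the neck joining them (which exists since $\Sigma_i$ is connected) can lie outside every fixed compact set, drifting off with $\partial\Sigma_i$, so no short essential loops are produced and $\inj_{\Sigma_i}\geq 1$ is never violated. The paper argues differently: higher multiplicity produces a positive Jacobi field on $\Sigma_\infty$, forcing stability by \cite{FSC} and hence forcing $\Sigma_\infty$ to be a plane by \cite{SchoenStab}, contradicting the fact that $\inj_{\Sigma_\infty}(0)\leq\Delta$ makes $\Sigma_\infty$ non-simply connected.
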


We will prove Theorem \ref{WeakCompThmInt} by using the lamination
theory of Colding and Minicozzi.  The key fact is that the weak
chord-arc bounds (i.e. Proposition 1.1 of \cite{CY}) allow us to
show that the sequence $\Sigma_i$ is ULSC, Uniformly Locally Simply
Connected (see Definition \ref{ULSCdef} below). That is, there is a small,
but uniform, \emph{extrinsic} scale on which the sequence is simply
connected (the uniform lower bounds for the injectivity radius
provide such a uniform \emph{intrinsic} scale). This allows the
local application of the work of Colding and Minicozzi for disks.

The uniform upper bound on the injectivity radius at $0$ implies the
existence of a closed geodesic, $\gamma_i$, in each $\Sigma_i$,
close to $0$ and with uniformly bounded length.  Using the
$\gamma_i$ and the lamination theorem, we establish uniform
curvature bounds on compact subsets of $\Real^3$ for the sequence.
Indeed, suppose one had a sequence with curvature blowing-up; then a
sub-sequence would converge to a singular lamination as in \cite{CM5}.  The nature of the convergence implies that any
sequence of closed geodesics in the surfaces, which lie in a fixed
extrinsic ball, must converge (in a Hausdorff sense) to a subset of
the singular axis; in particular this is true of the $\gamma_i$.
However, this must contradict certain chord-arc properties of the surfaces and thus cannot occur. The uniform
curvature bounds, together with Schauder estimates and the
Arzela-Ascoli theorem then prove Theorem \ref{WeakCompThmInt}.

In Section \ref{twosidedsec}, we use Theorem \ref{WeakCompThmInt} to
deduce that, for sequences in $\mathcal{E}(1,g,R_i)$, as long as the
genus stays inside a fixed extrinsic ball and does not shrink off,
then one has convergence to an element of $\mathcal{E}(1,g)$.
Indeed, with such uniform control, the no-mixing theorem of
\cite{CM5} gives a uniform lower bound on the
injectivity radius and so Theorem \ref{WeakCompThmInt} applies; that
the genus remains in a fixed ball implies that the limit surface
must belong to $\mathcal{E}(1,g)$.  The proof of this gives Theorem
\ref{CpctnessCorg1}. In order to show our main application, i.e.
Theorem \ref{MainCpctnessThm}, which we prove in Section
\ref{GlobalSec}, we couple Theorem \ref{CpctnessCorg1} with the fact
that the surfaces are asymptotic to helicoids. The connection
between the convergence on compact subsets of $\Real^3$ and the
asymptotic behavior at the end is made using the holomorphic Weierstrass data.

Throughout, we denote balls in $\Real^3$, centered at $x$ with
radius $r$, by $B_r(x)$, while intrinsic balls in a surface,
$\Sigma$, are denoted by $\mathcal{B}^\Sigma_r(x)$. The norm squared
of the second fundamental form of $\Sigma$ is $|A|^2$.  At various
points we will need to consider $\Sigma\cap B_r(x$) and when we do
so we always assume $\partial B_r(x)$ meets $\Sigma$ transversely.

\section{Curvature bounds at the Scale of the Topology} \label{WeakCompSec}
As mentioned in the introduction, the lamination theory of
\cite{CM5} proves crucial for the proof of Theorem
\ref{WeakCompThmInt}. Indeed, this implies that there is a
sub-sequence with either uniform curvature bounds or one of two
possible singular convergence models. In the latter case, a simple
topological argument will rule out one singular model and so imply
that the sequence behaves like the blow-down of a helicoid -- i.e.
like Theorem 0.1 of \cite{CM4}. This contradicts the upper bound on
the injectivity radius at the origin, which proves the desired
curvature bounds.

\subsection{Technical lemmas}
In order to obtain the curvature bounds, we will need four technical lemmas.
We first note the following simple topological consequence of the maximum principle:
\begin{prop} \label{BndryCompBndProp}
Let $\Sigma\in \mathcal{E}(e,g,R)$ and suppose $B_r(x)\subset
B_R$. Then, for any component $\Sigma_0$ of $\Sigma\cap
B_{r}(x)$, $\partial \Sigma_0$ has at most $g+e$ components.
\end{prop}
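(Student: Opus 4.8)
The plan is to reduce the statement to a bookkeeping of Euler characteristics, with the maximum principle entering only to control the ``outside'' pieces. First I would record that, since $B_r(x)\subset B_R$ and $\partial\Sigma\subset\partial B_R$, the open ball $B_r(x)$ contains no point of $\partial\Sigma$; hence, by the standing transversality assumption, $\Sigma\cap\partial B_r(x)$ is a finite disjoint union of smooth simple closed curves on the sphere $\partial B_r(x)$, and every boundary curve of $\Sigma_0$ is one of these. Write $b$ for the number of components of $\partial\Sigma_0=\gamma_1\cup\cdots\cup\gamma_b$ and let $g_0\geq 0$ denote the genus of $\Sigma_0$, so that $\chi(\Sigma_0)=2-2g_0-b$. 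The goal becomes $b\leq g+e$.

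Next I would cut the (compact, connected) surface $\Sigma$ along the circles $\gamma_1,\dots,\gamma_b$. This separates $\Sigma$ into $\Sigma_0$ together with the remaining pieces $\Sigma_1',\dots,\Sigma_c'$ lying in $\Real^3\setminus B_r(x)$, where $\Sigma_i'$ has genus $g_i'$ and $b_i'$ boundary curves. Each $\gamma_j$ is a boundary curve of exactly one $\Sigma_i'$ (embeddedness forces $\Sigma_0$ on one side and an outside piece on the other), while the $e$ components of $\partial\Sigma$ are distributed among the $\Sigma_i'$; thus $\sum_i b_i'=b+e$. Since Euler characteristic is additive across the cutting circles, $\chi(\Sigma)=\chi(\Sigma_0)+\sum_i\chi(\Sigma_i')$. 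Substituting $\chi(\Sigma)=2-2g-e$, $\chi(\Sigma_0)=2-2g_0-b$, and $\chi(\Sigma_i')=2-2g_i'-b_i'$ and simplifying yields
\[
 b=g+c-g_0-\sum_{i=1}^{c}g_i'\leq g+c ,
\]
so it remains only to show $c\leq e$.

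This last bound is where the maximum principle is essential, and it is the crux of the argument. I claim every outside piece $\Sigma_i'$ must contain at least one component of $\partial\Sigma$. Indeed, if some $\Sigma_i'$ had all of its boundary on $\partial B_r(x)$, then $\Sigma_i'$ would be a compact minimal surface lying in $\Real^3\setminus B_r(x)$ with $\partial\Sigma_i'\subset\partial B_r(x)$; the convex hull property then forces $\Sigma_i'\subset\overline{B_r(x)}$, whence $\Sigma_i'\subset\partial B_r(x)$, which is impossible since $|y-x|^2$ is strictly subharmonic on a minimal surface and so cannot be constant on an open set. Hence each of the $c$ disjoint pieces $\Sigma_i'$ carries at least one of the $e$ boundary circles of $\Sigma$, giving $c\leq e$ and therefore $b\leq g+e$. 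I expect the only subtlety to be the careful accounting in the cutting step (verifying that each $\gamma_j$ borders exactly one outside component and that these components are genuinely separated by the $\gamma_j$); the geometric input is confined entirely to the short convex-hull argument above.
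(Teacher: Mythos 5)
Your overall strategy coincides with the paper's: cut $\Sigma$ along $\partial\Sigma_0$, run an Euler characteristic count over the resulting pieces, and use the maximum principle (via the convex hull property) to force every complementary piece to reach $\partial B_R$. The bookkeeping itself is correct: your identity $b=g+c-g_0-\sum_{i=1}^{c}g_i'$ and the reduction to $c\leq e$ are exactly the paper's $e_0=n+g-\sum_{i=0}^n g_i$ and $n\leq e$.

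There is, however, one false claim, and it is precisely the claim your final contradiction rests on: that the complementary pieces $\Sigma_1',\dots,\Sigma_c'$ lie in $\Real^3\setminus B_r(x)$. Cutting $\Sigma$ only along the boundary curves of $\Sigma_0$ does not confine the other pieces to the exterior of the ball: if $\Sigma\cap B_r(x)$ has a second component (picture a tube that leaves $B_r(x)$ and re-enters it), that component is contained in one of the $\Sigma_i'$, which therefore meets $B_r(x)$. Consequently the deduction ``$\Sigma_i'\subset\overline{B_r(x)}$ and $\Sigma_i'\subset\Real^3\setminus B_r(x)$, whence $\Sigma_i'\subset\partial B_r(x)$'' fails as written. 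The repair is immediate and uses the standing transversality assumption instead: each $\gamma_j\subset\partial\Sigma_i'$ is a curve along which $\Sigma$ crosses $\partial B_r(x)$ transversally, and the side of $\gamma_j$ inside $B_r(x)$ belongs to $\Sigma_0$; hence $\Sigma_i'$ contains points strictly outside $\overline{B_r(x)}$ arbitrarily close to $\gamma_j$, directly contradicting the convex hull conclusion $\Sigma_i'\subset\overline{B_r(x)}$. So each $\Sigma_i'$ must carry at least one component of $\partial\Sigma$, giving $c\leq e$. With this one substitution your argument is complete and is essentially the paper's proof.
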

\begin{proof}
Let $\Sigma_i$, $1\leq i \leq n$, be the components of
$\Sigma\backslash \Sigma_0$. The $\Sigma_i$ are smooth compact
surfaces with boundary, and so $\partial \Sigma_i$ is a finite
collection of circles. Thus, the Euler characteristic satisfies
$\chi(\Sigma)=\sum_{i=0}^n \chi(\Sigma_i)$.  By the classification
of surfaces, $\chi(\Sigma)=2-2g-e$ and $\chi(\Sigma_i)=2-2g_i-e_i$
where $g_i$ is the genus of $\Sigma_i$ and $ e_i$ number of
components of $\partial \Sigma_i$.   Note that,  $\sum_{i=0}^n g_i
\leq g$ and $\sum_{i=1}^n e_i=e_0+e$.  Thus, we compute that
$e_0=n+g-\sum_{ i = 0}^n g_i$. The maximum principle implies that
$n\leq e$ (as any $\Sigma_i$, for $i\geq 1$, must meet $\partial
B_R$).  Thus, $e_0\leq e+g$.
\end{proof}

Next we note it is impossible to minimally immerse a (intrinsically) long and thin cylinder in $\Real^3$:
\begin{lem} \label{NoThinTubesLem}
Let $\Gamma$ be a minimal surface with genus $g$ and with $\partial
\Gamma=\gamma_1\cup \gamma_2$ where the $\gamma_i$ are smooth and
satisfy $\int_{\gamma_i} 1+|k_g| \leq C_1$.
Then, there exists $C_2=C_2(g,C_1)$ so that $\dist_{\Gamma}
(\gamma_1,\gamma_2)\leq C_2$.
\end{lem}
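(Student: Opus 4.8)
The plan is to control the \emph{extrinsic} geometry of $\Gamma$ first and then convert this into the desired intrinsic bound by a volume–packing argument. Minimality of $\Gamma$ in $\Real^3$ is used throughout, and must be: the statement is false for abstract surfaces with $K\le 0$ (a long thin flat cylinder has $K=0$ and geodesic boundary, yet admits no minimal immersion). Note first that the hypothesis forces $\mathrm{Length}(\gamma_i)\le C_1$. I would begin by bounding the extrinsic separation $D=\dist_{\Real^3}(\gamma_1,\gamma_2)$. Since the coordinate functions are harmonic on a minimal surface, the first variation (divergence) identity gives $2\,\mathrm{Area}(\Gamma)=\int_{\partial\Gamma}\langle x-x_0,\eta\rangle\,ds$ for any fixed $x_0$, with $\eta$ the outward conormal. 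Taking $x_0\in\gamma_1$ and bounding $|x-x_0|\le\diam_{\Real^3}(\partial\Gamma)\le D+C_1$ on $\partial\Gamma$ yields $\mathrm{Area}(\Gamma)\le C_1(D+C_1)$. Conversely, as $\Gamma$ is connected and $\dist_{\Real^3}(\cdot,\gamma_1)\ge D$ on $\gamma_2$, this function attains $D/2$ at some $x_0'\in\Gamma$, and then $\dist_{\Real^3}(x_0',\partial\Gamma)\ge D/2$ by the triangle inequality; the monotonicity formula gives $\mathrm{Area}(\Gamma)\ge\mathrm{Area}(\Gamma\cap B_{D/2}(x_0'))\ge \pi D^2/4$. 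Comparing the two estimates forces $D\le D_0(C_1)$.

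With $D$ bounded, the convex hull property $\Gamma\subset\mathrm{ConvHull}(\partial\Gamma)$ (again from the maximum principle) confines $\Gamma$ to a ball of radius $\rho=O(C_1)$ and gives $\mathrm{Area}(\Gamma)\le\mathcal{A}(C_1)$. To bound $L=\dist_\Gamma(\gamma_1,\gamma_2)$, I would take a unit-speed minimizing geodesic $\sigma\colon[0,L]\to\Gamma$ from $\gamma_1$ to $\gamma_2$ and set $p_j=\sigma(2j)$ for $1\le j\le\lfloor (L-1)/2\rfloor$, so that the intrinsic balls $\mathcal{B}^\Gamma_1(p_j)$ are pairwise disjoint and avoid $\partial\Gamma$. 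Since $K=-\tfrac12|A|^2\le 0$ there are no conjugate points, so whenever the injectivity radius at $p_j$ is at least $1$ the ball $\mathcal{B}^\Gamma_1(p_j)$ is an embedded disk and Günther's volume comparison gives $\mathrm{Area}(\mathcal{B}^\Gamma_1(p_j))\ge\pi$. Disjointness together with the area bound then limits the number of such disk balls to at most $\mathcal{A}/\pi$.

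It remains to bound the balls of injectivity radius $<1$. Each such ball contains a geodesic loop of length $<2$, which is homotopically essential because $K\le 0$, and whose free homotopy class carries a short closed geodesic lying in that ball. Two such geodesics arising from disjoint balls must lie in distinct classes: were they freely homotopic, the flat strip theorem would produce a flat cobounding annulus, which by minimality would be planar, forcing a closed geodesic to lie in a plane, an impossibility. Since a genus-$g$ surface with two boundary circles admits at most $C(g)$ disjoint, pairwise non-homotopic essential simple closed curves, at most $C(g)$ balls can have injectivity radius $<1$. Hence $\lfloor(L-1)/2\rfloor\le \mathcal{A}/\pi+C(g)$, which bounds $L$ by a constant $C_2(g,C_1)$.

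The conceptual core (convex hull and monotonicity to bound $D$, Günther packing to bound $L$) is robust, and I expect the main obstacle to be the final step: excluding many disjoint small-injectivity-radius balls. The delicate point is that nonpositive curvature \emph{alone} permits arbitrarily many parallel short closed geodesics (as on a flat cylinder), so one genuinely needs minimality, through the flat-strip/planarity argument, to rule out parallel copies and reduce to the purely topological count $C(g)$. Some care will also be needed to promote the injectivity-radius-realizing geodesic loops to honest closed geodesics to which the flat strip theorem applies, and to confirm that $\Gamma$ is compact in the sense required by the first variation and monotonicity identities.
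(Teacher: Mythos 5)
Your first two steps are sound and genuinely different from the paper's route: the paper never bounds the extrinsic separation at all (it traps $\Gamma$ in an infinite solid cylinder of radius $C_1$ via the convex hull property), whereas your first-variation-plus-monotonicity argument gives the stronger statement that $\Gamma$ lies in a ball of radius $O(C_1)$ with area $\mathcal{A}(C_1)$; the G\"unther packing of disjoint unit balls along a minimizing geodesic is also fine. The lemma then reduces, as you say, to bounding the number of disjoint balls with injectivity radius less than $1$, and it is exactly there that the proof breaks.

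What Klingenberg's argument actually gives you inside $\mathcal{B}^\Gamma_1(p_j)$ is a geodesic \emph{loop} with a corner at $p_j$, not a closed geodesic, and the flat-strip/Gauss--Bonnet mechanism genuinely needs corner-free curves: for two disjoint, simple, freely homotopic closed geodesics the cobounding embedded annulus $A$ satisfies $\int_A K=0$, forcing $K\equiv 0$, hence $|A|^2=-2K\equiv 0$, planarity by unique continuation, and a contradiction; but if either curve has a corner, Gauss--Bonnet picks up exterior-angle terms of uncontrolled sign and no flatness follows. This is not a technicality --- on a flat cylinder the parallel short loops are honest closed geodesics and pairwise homotopic, so the whole burden of your count rests on removing the corner. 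Your proposed removal, minimizing length in the free homotopy class, fails on two counts: (i) $\partial\Gamma$ is only assumed smooth with $\int 1+|k_g|\leq C_1$, not convex, so the minimizer may run into the boundary and need not be a geodesic at all --- and note the paper is aware of precisely this obstruction, since in the proof of Lemma \ref{boundedcurvlem} it justifies such a minimization by invoking Lemma \ref{NoThinTubesLem} itself, so this route is circular inside the present proof; and (ii) even when a closed geodesic representative exists, nothing confines it to $\mathcal{B}^\Gamma_1(p_j)$, so the pairwise disjointness underlying both the annulus argument and the topological count of at most $C(g)$ disjoint, non-homotopic, essential simple closed curves is lost. The gap can be closed without any topology, and this is in effect what the paper does: Gauss--Bonnet gives $\int_\Gamma |A|^2\leq 8\pi g+4C_1$, so by pigeonhole all but $C(g,C_1)/\epsilon$ of your disjoint balls satisfy $\int_{\mathcal{B}^\Gamma_1(p_j)}|A|^2\leq \epsilon$; for $\epsilon$ small the intrinsic Choi--Schoen estimate makes any such ball a small-gradient graph over its tangent plane, hence of area at least a universal $c>0$, and your area bound caps their number by $\mathcal{A}/c$ --- no injectivity radius needed. (The paper runs this same pigeonhole/Choi--Schoen step but derives its contradiction from the thin cylinder, an almost-flat ball of intrinsic radius $4C_1$ being too wide to fit, rather than from an area bound.)
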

\begin{proof}
It is clear that there are points $p_1, p_2\in \Real^3$ so that
$\gamma_i\subset B_{C_1} (p_i)$.  Thus, up to rotating and
translating $\Gamma$, $\gamma_i \subset \set{x_1^2+x_2^2\leq C_1^2}$
and so by the convex hull property, $\Gamma$ itself lies in this
cylinder. Notice that $|\nabla \dist_{\mathbb{S}^2}(\mathbf{n},
\mathbf{n}_0)| \leq |A|$ (here $\mathbf{n}$ is the normal to
$\Gamma$) and so there exists a uniform constant $\delta_0$ so that
if $s \sup_{\mathcal{B}_s(p)} |A|\leq \delta_0$ then
$\mathcal{B}_s(p)$ can be expressed as a graph over $T_p\Gamma$
where the graph has gradient $\leq 1/100$. Indeed, for such a $p$ if
$q\in \mathcal{B}_s(p)$ then $|q-p|\geq \frac{9}{10} \dist_\Gamma
(p,q)$. We refer the reader to Section 2 of \cite{CM2} for more
details. Thus, if $p\in \Gamma$ with $\mathcal{B}_{4C_1}(p) \cap
\partial \Gamma=\emptyset$ and $4C_1 \sup_{\mathcal{B}_{4C_1}(p)}
|A|\leq \delta_0$ then $\mathcal{B}_{4C_1}(p)$ cannot lie in the
cylinder $ \set{x_1^2+x_2^2\leq C_1^2}$. We claim that if
$\dist_\Gamma(\gamma_1,\gamma_2)$ was very large we could find such
a $p$ yielding the desired contradiction.

We use the intrinsic version of a result of Choi-Schoen
\cite{ChoiSchoen} on $\Gamma$.  That is, there exists a universal
$\epsilon>0$ so that if $\mathcal{B}_{s}(p)\subset \Gamma$ is
disjoint from $\partial \Gamma$ and $\int_{\mathcal{B}_s} |A|^2 \leq
\delta \epsilon$ then $\sup_{\mathcal{B}_{s-t} (p)} t^2 |A|^2 \leq
\delta$. We emphasize that $s$ is not a priori restricted to be
smaller than $\inj_\Gamma (p)$.  By the Gauss-Bonnet theorem,
$\int_{\Gamma} |A|^2 \leq 8\pi g+4 C_1=C_1'$.
 If $\dist_\Gamma(\gamma_1,\gamma_2)\geq \frac{32 C_1' C_1}{\delta_0^2 \epsilon}$ then by the pigeonhole principle, there is a $p\in \Gamma$ so that $\mathcal{B}_{8C_1}(p)\cap \partial \Gamma=\emptyset$
 and $\int_{\mathcal{B}_{8C_1}(p)} |A|^2 \leq \delta_0^2 \epsilon$.  The Choi-Schoen theorem and the conclusion of the preceding paragraph then imply that  $C_2\leq \frac{32 C_1' C_1}{\delta_0^2 \epsilon}$.
\end{proof}

We need that minimizing geodesics in almost flat surfaces are almost straight:
\begin{lem} \label{MinGeodesicLem}
Let $u:D_{3/2}(0)\to \Real$ and let $\Sigma$ be the graph of  $u$.  Then for any $\delta>0$ there
is an $\epsilon>0$  so: if $||u||_{C^2}\leq \epsilon$ and $\gamma\subset \Sigma$ a minimizing geodesic with $\partial \gamma=\set{p_-,p_+}\in
\partial B_{1}(0)$ satisfies $\gamma\cap B_{\epsilon}(0)\neq \emptyset$, then there is a line
$0\in L$ so that the Hausdorff distance between $\gamma$ and $L \cap
D_1(0)$ is less than $\delta$.
\end{lem}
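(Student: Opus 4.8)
The plan is to argue by contradiction, exploiting that as $\|u\|_{C^2}\to 0$ the induced metric on the graph $\Sigma$ converges to the flat metric, so that minimizing geodesics are forced to converge to flat minimizing geodesics, i.e.\ straight segments. Suppose then that the conclusion fails for some $\delta_0>0$: there is a sequence $u_i:D_{3/2}(0)\to\Real$ with $\|u_i\|_{C^2}\leq 1/i$, with graphs $\Sigma_i$, and minimizing geodesics $\gamma_i\subset\Sigma_i$ with endpoints $p_\pm^i\in\partial B_1(0)$ and $\gamma_i\cap B_{1/i}(0)\neq\emptyset$, but such that for \emph{every} line $L\ni 0$ the Hausdorff distance between $\gamma_i$ and $L\cap D_1(0)$ is at least $\delta_0$. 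I will derive a contradiction by extracting a convergent subsequence.

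First I would record the metric convergence: in the graph coordinates over $D_{3/2}(0)$ the induced metrics are $g_i=\delta+du_i\otimes du_i$, so $\|g_i-\delta\|_{C^1}\to 0$; in particular $\dist_{\Sigma_i}$ converges uniformly on compact subsets to the flat distance, and $\mathrm{length}_{g_i}$ and $\mathrm{length}_\delta$ agree up to a factor $1+o(1)$. Next I bound the lengths. Writing $p_\pm^i=(\bar p_\pm^i,u_i(\bar p_\pm^i))$, the relation $|\bar p_\pm^i|^2+u_i(\bar p_\pm^i)^2=1$ forces the projections $\bar p_\pm^i$ into $\overline{D_1(0)}\subset D_{3/2}(0)$, so the lift of the straight Euclidean chord joining them is an admissible competitor in $\Sigma_i$ of length at most $\diam+o(1)$; since $\gamma_i$ is minimizing, $\mathrm{length}(\gamma_i)\leq 3$ for $i$ large. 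Parametrizing by arclength, the $\gamma_i$ are uniformly Lipschitz maps into the compact set $\overline{B_{3/2}(0)}$, so by Arzel\`a--Ascoli a subsequence converges uniformly to a curve $\gamma_\infty$.

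I would then identify $\gamma_\infty$. Using the metric convergence, $\mathrm{length}_\delta(\gamma_i)=\mathrm{length}_{g_i}(\gamma_i)(1+o(1))=\dist_{\Sigma_i}(p_-^i,p_+^i)(1+o(1))\to |p_+^\infty-p_-^\infty|$, while lower semicontinuity of the fixed flat length under uniform convergence gives $\mathrm{length}_\delta(\gamma_\infty)\leq |p_+^\infty-p_-^\infty|$. Since the straight segment is the unique flat-shortest path, $\gamma_\infty$ \emph{is} that segment joining $p_-^\infty,p_+^\infty\in\partial D_1(0)$. Because $\gamma_i\cap B_{1/i}(0)\neq\emptyset$, uniform convergence forces $0\in\gamma_\infty$, and a straight chord of the unit circle passing through its center is a diameter; hence $\gamma_\infty=L\cap\overline{D_1(0)}$ for the line $L$ through $0$ and $p_+^\infty$. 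Finally, uniform convergence of the $\gamma_i$ yields Hausdorff convergence of their images, so for large $i$ the Hausdorff distance between $\gamma_i$ and $L\cap D_1(0)$ is below $\delta_0$, contradicting the choice of the $\gamma_i$.

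The step requiring the most care is the passage to the limit --- establishing convergence of the intrinsic distances and that the uniform limit of the \emph{minimizing} geodesics really is the flat minimizing segment. This can be handled either by the length/lower-semicontinuity comparison above, or equivalently by writing the geodesic ODE for $\gamma_i$ and noting that the Christoffel symbols of $g_i$ converge to $0$ in $C^0$ (this is precisely where $\|u_i\|_{C^2}\to 0$ is used), so that a $C^1$-limit satisfies $\gamma_\infty''=0$ and is a straight line. The remaining ingredients --- the uniform length bound and the implication ``straight chord through the center is a diameter'' --- are routine.
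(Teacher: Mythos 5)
Your proposal is correct, and at the top level it is the same argument as the paper's: argue by contradiction along a sequence $u_i\to 0$, and play the minimizing property of $\gamma_i$ against lifts of straight Euclidean segments to the nearly flat graphs. The implementations differ in a genuine way, though. The paper never extracts a limit of the curves themselves: it first invokes chord-arc bounds for graphs with small $C^1$ norm to get $\dist_{\Real^3}(p_+^i,p_-^i)\geq 1/2$, so that the limiting endpoints $p_\pm^\infty$ are distinct and determine a line $L$; it then proves $0\in L$ by an explicit competitor construction --- if $0\notin L$, the lift of $L\cap D_1(0)$ together with two short arcs joining it to $p_\pm^i$ connects the endpoints with length $<2-\alpha$, whereas any curve passing near $0$ with endpoints near the unit circle has length $\geq 2-\alpha$ --- and the closing Hausdorff estimate is dispatched with ``similar arguments.'' You instead run compactness on the curves: a uniform length bound (again from minimality) gives an Arzel\`a--Ascoli limit $\gamma_\infty$, which you identify as the straight segment via length convergence, lower semicontinuity, and rigidity of the equality case; then $0\in\gamma_\infty$ forces the segment to be a diameter, and Hausdorff closeness is automatic from uniform convergence. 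Your route makes two of the paper's inputs unnecessary: the chord-arc/endpoint-separation step is subsumed (if $p_-^\infty=p_+^\infty$, the limit would be a single point that both contains $0$ and lies on the unit sphere, which is absurd), and the final Hausdorff claim needs no separate argument. It also makes visible that the length-comparison argument needs only $C^1$-smallness of $u$ (true of the paper's proof as well; the $C^2$ hypothesis matters only for your alternative geodesic-ODE route). What the paper's version buys is brevity and the avoidance of curve-space compactness: everything reduces to scalar length inequalities along the sequence. Both proofs are complete; yours is more systematic where the paper gestures at ``similar arguments.''
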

\begin{proof}
Fix $\delta>0$ and suppose this result was false. Then there is a sequence of $\Sigma_i$ and $u_i$ with $u_i\to 0$ in $C^2$ and
points $p^i_\pm$ connected by minimizing geodesic $\gamma_i\subset
\Sigma_i$ meeting $B_{\epsilon_i}(0)$ where $1>\epsilon_i\to 0$,
but the conclusion of the lemma does not hold.

Chord-arc bounds for graphs with small $C^1$ bounds imply
$\dist_{\Real^3} (p_+^i,p_-^i)\geq 1/2$. Letting $u_i \to 0$, we get
$p_\pm ^i\to p_\pm^\infty\in \partial D_1(0)$ and
$|p_-^\infty-p_+^\infty|\geq 1/2$.
Now, let $L$ be the line connecting $p_\pm^\infty$.  We claim that
$0\in L$.  If not, then $\ell(L\cap D_1(0))=2-4\alpha$ for some
$\alpha>0$.  Let $L_i$ be the graph (of $u_i$) over $L\cap D_1(0)$,
so $L_i$ is a segment that is a subset of $\Sigma_i$. Clearly, for
some $i_0$,  $i\geq i_0$ implies $\dist_{\Sigma_i}(p^i_\pm,
L_i)<\alpha$ and $\ell(L_i)<\ell(L\cap D_1(0))+\alpha$; and thus, $
\ell(\gamma_i)<2-\alpha$. On the other hand, the hypotheses give a
$p^i\in \gamma_i$ with $p^i\to 0$. Thus, by possibly increasing
$i_0$, for $i\geq i_0$, $\ell(\gamma_i)\geq 2-\alpha$. Similar
arguments show, for $i$ large, $L\cap D_1(0)$ must be Hausdorff
close to $\gamma_i$, yielding the desired contradiction.
\end{proof}

Our final technical lemma shows that, for a ULSC sequence $\Sigma_i$
converging to a minimal lamination with a single singular line, any
closed geodesics in the $\Sigma_i$ that lie in a fixed extrinsic
ball must collapse to the singular set.
\begin{lem} \label{GeoInThinTubeLem} Fix $\Delta>0$.
Suppose $\Sigma_i \in \mathcal{E}(e,g,R_i)$, $R_i \to \infty$,
the $\Sigma_i$ converge to the singular lamination $\mathcal{L}$
with singular set $\mathcal{S}=\mathcal{S}_{ulsc}$ in the sense of
Theorem 0.9 of \cite{CM5}, and that $\mathcal{S}$ is the
$x_3$-axis. Then, given $\epsilon>0$ there exists an $i_0$ so that
for $i\geq i_0$, if $\gamma_i$ is an embedded closed
 geodesic in $\Sigma_i$,
with $\gamma_i\subset
B_{\Delta}$, then $\gamma_i \subset {T}_\epsilon (\mathcal{S})$,
the extrinsic $\epsilon$-tubular neighborhood of $\mathcal{S}$.
\end{lem}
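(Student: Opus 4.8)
The plan is to argue by contradiction, using the point of $\gamma_i$ farthest from the axis. First I would suppose the conclusion fails: there is an $\epsilon>0$, a subsequence (not relabeled), and embedded closed geodesics $\gamma_i\subset B_\Delta$ with $\gamma_i\not\subset T_\epsilon(\mathcal{S})$. Writing $r(x)=\dist(x,\mathcal{S})$ for the extrinsic distance to the $x_3$-axis, I would pick $q_i\in\gamma_i$ realizing $\rho_i:=\max_{\gamma_i} r\geq\epsilon$, and pass to a further subsequence so that $q_i\to q_\infty$ with $r(q_\infty)\geq\epsilon$; in particular $q_\infty\notin\mathcal{S}$. The goal is to show that such a maximum point cannot persist in the region where the convergence is smooth.

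Next I would invoke the structure of the convergence. By Theorem 0.9 of \cite{CM5}, away from $\mathcal{S}$ the $\Sigma_i$ converge smoothly, with locally bounded curvature, to the lamination $\mathcal{L}$; in the $\mathcal{S}_{ulsc}$ case the leaves are planes orthogonal to $\mathcal{S}$, i.e. horizontal planes. Hence on the fixed region $\Omega=B_{\Delta+1}\setminus T_{\epsilon/2}(\mathcal{S})$, which lies a definite distance from $\mathcal{S}$ and contains $q_i$ for large $i$, one has $\sup_{\Sigma_i\cap\Omega}|A|\to 0$ and the unit normal of $\Sigma_i$ converging to the vertical. Fixing a small scale $\lambda\in(0,\epsilon/4)$, the intrinsic ball $\mathcal{B}^{\Sigma_i}_\lambda(q_i)$ is then, for $i$ large, the graph of a function $u_i$ over the nearly horizontal tangent plane $T_{q_i}\Sigma_i$ whose rescaling by $\lambda^{-1}$ has $C^2$ norm tending to $0$.

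The core of the argument is a first-order criticality observation combined with straightening. Since $\gamma_i$ is a closed geodesic, every point is interior, so $q_i$ is an interior maximum of $r|_{\gamma_i}$ and hence a critical point: $\dot\gamma_i(q_i)\perp\nabla^{\Sigma_i}r(q_i)$. Because $T_{q_i}\Sigma_i$ is nearly horizontal while $\nabla^{\Real^3}r(q_i)$ is the horizontal unit radial vector, $\nabla^{\Sigma_i}r(q_i)$ is nearly radial of length near $1$; thus $\dot\gamma_i(q_i)$ is nearly tangent to the circle $\{r=\rho_i\}$ in the horizontal plane through $q_i$. A short arc of $\gamma_i$ about $q_i$ is minimizing (the surface is nearly flat there), so after rescaling the graph above to unit scale and applying Lemma \ref{MinGeodesicLem}, this arc is Hausdorff close to a straight segment $L_i$ through $q_i$ in the nearly horizontal plane, in the direction $\dot\gamma_i(q_i)$, hence nearly tangent to $\{r=\rho_i\}$. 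But a straight segment in a horizontal plane tangent to $\{r=\rho_i\}$ realizes its \emph{minimum} distance to the axis at the tangency point, so $r$ strictly increases along $L_i$ away from $q_i$ in both directions. Therefore $r|_{\gamma_i}$ has a strict local minimum, not a maximum, at $q_i$, contradicting $r(q_i)=\max_{\gamma_i}r$. This forces $\rho_i<\epsilon$, i.e. $\gamma_i\subset T_\epsilon(\mathcal{S})$, for all large $i$.

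The main obstacle I expect is making the local picture uniform and correctly extracting it from the lamination theory: one must know that in the $\mathcal{S}_{ulsc}$ setting the leaves of $\mathcal{L}$ are genuinely planes \emph{orthogonal} to $\mathcal{S}$ (so the surface normal converges to vertical and $\nabla^{\Sigma_i}r$ is genuinely close to the full radial direction), and that the curvature decay and the $C^2$-smallness of $u_i$ after rescaling hold uniformly over the whole annular region $\{r\geq\epsilon\}\cap B_\Delta$, not merely along a single subsequence of points. The secondary technical point is verifying that a short sub-arc of the closed geodesic about $q_i$ is minimizing so that Lemma \ref{MinGeodesicLem} applies; this follows from the near-flatness of $\Sigma_i$ near $q_i$, but should be stated carefully.
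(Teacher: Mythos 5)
Your proposal is correct and follows essentially the same route as the paper's proof: argue by contradiction at the point of $\gamma_i$ farthest from the axis, use the smooth graphical convergence away from $\mathcal{S}$ to horizontal planes, apply Lemma \ref{MinGeodesicLem} to straighten a short (minimizing, by near-flatness/geodesic convexity) sub-arc of $\gamma_i$, and conclude because a straight segment in a horizontal plane through a point on the boundary of the cylinder $\set{\rho\leq \rho_i}$ must leave that cylinder. The only differences are cosmetic: the paper does not need your first-order criticality/near-tangency observation, since transversality of the horizontal plane to the cylinder's axis already forces the limit line to exit the cylinder whatever its direction, and your closing claim that $r|_{\gamma_i}$ has a ``strict local minimum'' at $q_i$ should be weakened to what Hausdorff closeness actually yields --- points of $\gamma_i$ near the endpoints of $L_i$ whose distance to the axis strictly exceeds $\rho_i$ for large $i$ --- which is precisely the contradiction the paper draws.
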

\begin{proof}
 Suppose the lemma was not true; then there exists a sub-sequence of the $\Sigma_i$ so that $\gamma_i$
intersects $K_\epsilon=\overline{B}_{\Delta}(0)\backslash
{T}_{\epsilon} (\mathcal{S})$. Choose $p_i\in \gamma_i \cap
\overline{K}_\epsilon$ so that $x_1^2+x_2^2=\rho^2$ achieves its
maximum on $\gamma_i$ at $p_i$. After possible passing to a further
sub-sequence, $p_i \to p_\infty\in \overline{K}_\epsilon$ with
$\rho^2(p_\infty)\geq \epsilon$. Translate so that
$x_3(p_\infty)=0$. The convergence of \cite{CM5} implies that for
sufficiently large $i$, the component $\Gamma_i$ of $B_{\epsilon/2}
(p_\infty)\cap \Sigma_i$ containing $p_i$ converges smoothly, as a
graph, to the disk $B_{\epsilon/2} (p_\infty)\cap \set{x_3=0}$.  Let
$\gamma'_i$ be the component of $B_{\epsilon/2}(p_\infty)\cap
\gamma_i$ containing $p_i$ with boundary points $q_i^\pm$.

For a given $\delta>0$, there exists $i$ large so that $\Gamma_i$,
$\gamma_i'$ satisfy the hypotheses of Lemma \ref{MinGeodesicLem}
(after a rescaling).  Notice that for large $i$, $\Gamma_i$ is very
flat and in particular is geodesically convex; hence $\gamma_i'$ is
the minimizing geodesic connecting $q_i^\pm$. If $L_i$ is the line
given by the lemma, $\gamma_i'$ lies in the $\delta$-tubular
neighborhood of $L_i\cap D_{\epsilon/4}(p_\infty)$. By passing to a
sub-sequence, the $\gamma_i'$ converge to a segment of a line $L$ in
$\set{x_3=0}$ that goes through $p_\infty$.  Because $\set{x_3=0}$
is transverse to the axis of the cylinder $\set{y\in \Real^3|
\rho^2(y) \leq \rho^2(p_\infty)}$ and $p_\infty$ is
on the boundary of this cylinder, $L$ cannot lie entirely within it.
However, this implies there are points $q_i\in \gamma_i$ with
$\lim_{i\to \infty} \rho^2(q_i) > \rho^2(p_\infty)$, giving a
contradiction.
\end{proof}

\subsection{Proof of Theorem
\ref{WeakCompThmInt}}\label{wkcompactthmsec} We apply the preceding
lemmas in order to show uniform curvature bounds.  We note that
similar techniques are used by Meeks and Rosenberg in \cite{MRDuke}
to achieve different results:
\begin{lem} \label{boundedcurvlem}
 Let $\Sigma_i\in \mathcal{E}(e,g,R_i)$ be such that $0\in \Sigma_i$, $\inj_{\Sigma_i}\geq 1$, $\inj_{\Sigma_i}(0)\leq \Delta$ and $R_i\to \infty$. Then a sub-sequence of the $\Sigma_i$
 satisfy, for any compact $K$,
\begin{equation}
\sup_{i} \sup_{K\cap \Sigma_i} |A|^2 <\infty.
\end{equation}
\end{lem}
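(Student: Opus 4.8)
The plan is to argue by contradiction using the Colding–Minicozzi lamination theorem together with the three technical lemmas just established. Suppose the curvature were \emph{not} uniformly bounded on some compact set; then after passing to a sub-sequence we may assume $\sup_{K\cap\Sigma_i}|A|^2\to\infty$ for some fixed compact $K$. By Theorem 0.9 of \cite{CM5} (summarized as Theorem \ref{CM509Thm}), a further sub-sequence converges to a singular minimal lamination $\mathcal{L}$ with a non-empty singular set $\mathcal{S}$. The weak chord-arc bounds of \cite{CY}, applied at the uniform intrinsic scale provided by $\inj_{\Sigma_i}\geq 1$, show the sequence is ULSC, which forces the singular set to be of the form $\mathcal{S}=\mathcal{S}_{ulsc}$ — that is, a collection of Lipschitz curves, and the no-mixing theorem rules out the other singular model. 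The main point of the topological hypotheses ($e,g\geq 1$, finite topology) is that they prevent a multi-sheeted or branched singular set; I expect one can reduce, after a translation and rotation, to the model case in which $\mathcal{S}$ is a single line, the $x_3$-axis, so that Lemma \ref{GeoInThinTubeLem} becomes directly applicable.

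The heart of the argument is to extract a closed geodesic in each $\Sigma_i$ that lives in a fixed extrinsic ball and then derive a contradiction from Lemma \ref{GeoInThinTubeLem}. The hypothesis $\inj_{\Sigma_i}(0)\leq\Delta$ is exactly what produces such a geodesic: at the point $0\in\Sigma_i$ the injectivity radius is bounded above, so there is a closed geodesic $\gamma_i$ through (or near) $0$ of length at most, say, $2\Delta$. Combined with the lower bound $\inj_{\Sigma_i}\geq 1$ and standard comparison, the $\gamma_i$ have uniformly bounded length and hence, since they pass near $0$, lie in a fixed extrinsic ball $B_\Delta$ (here one uses that an intrinsically short loop through $0$ cannot stray far extrinsically — an intrinsic-to-extrinsic bound following from the graphical/chord-arc structure away from $\mathcal{S}$). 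Thus the $\gamma_i$ satisfy the hypotheses of Lemma \ref{GeoInThinTubeLem}, which says that for $i$ large every such closed geodesic must lie in an arbitrarily thin extrinsic tubular neighborhood $T_\epsilon(\mathcal{S})$ of the singular axis.

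This is where the contradiction is closed. Because $\gamma_i$ collapses into $T_\epsilon(\mathcal{S})$ for every $\epsilon>0$, its extrinsic diameter shrinks to zero as $i\to\infty$; but $\gamma_i$ is a closed geodesic of length bounded \emph{below} (the lower injectivity-radius bound $\inj_{\Sigma_i}\geq 1$ forces any closed geodesic to have length at least $2$). A closed geodesic of length $\geq 2$ that is pinched into a thin neighborhood of a straight line must wind many times around the axis or else be intrinsically long and extrinsically thin — precisely the configuration forbidden by Lemma \ref{NoThinTubesLem}, which says one cannot minimally immerse an intrinsically long, extrinsically thin cylinder. Quantitatively, cutting $\gamma_i$ and applying Lemma \ref{NoThinTubesLem} to the portion of $\Sigma_i$ it bounds (or threads) yields a uniform bound on $\dist_{\Sigma_i}$ between the relevant boundary curves, contradicting the length lower bound. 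Hence the curvature blow-up cannot occur, and we conclude $\sup_i\sup_{K\cap\Sigma_i}|A|^2<\infty$.

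I expect the main obstacle to be the passage from the abstract lamination convergence to the clean normal form used in Lemma \ref{GeoInThinTubeLem}, namely reducing the singular set $\mathcal{S}$ to a single straight line and verifying that the closed geodesics $\gamma_i$ genuinely remain in a \emph{fixed} extrinsic ball rather than drifting along the axis. Controlling this drift requires combining the one-sided curvature estimate of \cite{CM4} (to rule out nearby sheets accumulating away from $\mathcal{S}$) with the chord-arc bounds, and is the step most sensitive to the finite-topology hypotheses; the remaining length-contradiction via Lemma \ref{NoThinTubesLem} is comparatively routine once the geometric picture is pinned down.
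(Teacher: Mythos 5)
Your setup follows the paper: contradiction, the lamination theorem of \cite{CM5}, ULSC via the lower injectivity bound plus the weak chord-arc bounds of \cite{CY}, reduction to a single singular line, extraction of a closed geodesic $\gamma_i$ in a fixed ball from $\inj_{\Sigma_i}(0)\leq\Delta$ (the paper is more careful here: the injectivity bound only gives a geodesic \emph{lasso}, and one minimizes in its homotopy class, invoking Lemma \ref{NoThinTubesLem} to keep the minimizer in a fixed ball despite the non-convex boundary), and then Lemma \ref{GeoInThinTubeLem} to force $\gamma_i\subset T_\epsilon(\mathcal{S})$. But your final step --- the actual contradiction --- has a genuine gap, and it is precisely the step you dismiss as ``comparatively routine.'' First, your claim that the extrinsic diameter of $\gamma_i$ shrinks to zero is false: $T_\epsilon(\mathcal{S})$ is a thin neighborhood of a \emph{line}, not of a point, so $\gamma_i$ may stretch along the axis while lying in the tube. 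Second, Lemma \ref{NoThinTubesLem} is a statement about a minimal \emph{surface} with two boundary curves of bounded length and total geodesic curvature, concluding that the intrinsic distance between those two boundary curves is bounded. You never exhibit such a sub-surface of $\Sigma_i$ (the geodesic ``threading'' the tube does not bound one in any evident way), and even if you did, a bound on $\dist_{\Sigma_i}$ between two boundary curves is not in tension with the lower bound $\ell(\gamma_i)\geq 2$; there is no contradiction to close.

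What the paper actually does at this point is quite different and is the heart of the proof. Take $p_i\in\gamma_i$ the lowest point (minimal $x_3$) of $\gamma_i$, so $p_i\to p_\infty\in\mathcal{S}$, and let $\gamma_i^0$ be the arc of $\gamma_i$ through $p_i$ inside $B_{\delta_0/2}(p_\infty)$, with endpoints $q_i^\pm\in\partial B_{\delta_0/2}(p_\infty)$; here $\delta_0$ is the ULSC scale, which guarantees $\gamma_i$ (being non-contractible) cannot lie entirely in this ball. The injectivity radius lower bound together with $\ell(\gamma_i^0)\geq\delta_0/2$ forces $\dist_{\Sigma_i}(q_i^+,q_i^-)\geq\delta_0/3$. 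On the other hand, since $p_i$ is the \emph{lowest} point and $\gamma_i$ collapses onto $\mathcal{S}$, Lemma \ref{GeoInThinTubeLem} puts both endpoints $q_i^\pm$ into $B_\delta(p_+)$, where $p_+$ is the single point of $\partial B_{\delta_0/2}(p_\infty)\cap\mathcal{S}$ above $p_\infty$. The one-sided curvature estimate of \cite{CM4} (applied using the curvature blow-up at $p_+$) shows that for large $i$ only one component of a slightly larger ball about $p_+$ meets $B_\delta(p_+)$, so $q_i^+$ and $q_i^-$ lie in a common disk component; the weak chord-arc bounds of \cite{CY} then give $\dist_{\Sigma_i}(q_i^+,q_i^-)<\delta_0/3$ for $\delta$ small --- the contradiction. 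So the one-sided curvature estimate and chord-arc bounds are not, as you suggest, auxiliary tools for controlling drift of $\gamma_i$; they are the engine of the contradiction itself, and Lemma \ref{NoThinTubesLem} plays only the supporting role of making the homotopy minimization legitimate. (A smaller inaccuracy: the reduction from two singular lines to one is not via the no-mixing theorem, but via Remark \ref{twosingremark} combined with the boundary-component count of Proposition \ref{BndryCompBndProp}.)
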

\begin{proof}
If this was not the case then by the lamination theorem of
\cite{CM5} a sub-sequence of the $\Sigma_i$ would converge to a
singular lamination $\mathcal{L}$.  For any $x\in \Sigma_i$ with
$|x|\leq R_i/2$, the injectivity radius lower bound and the weak
chord-arc bounds of \cite{CY} imply that there is a $\delta_0>0$ so
the component of $B_{\delta_0}(x) \cap \Sigma_i$ containing $x$ is a
subset of $\mathcal{B}^{\Sigma_i}_{1/2}(x)$. For any fixed $x\in
\Real^3$, as long as $i$ is large enough so $|x|\leq R_i/2$, this
implies that every component of $B_{\delta_0}(x)\cap \Sigma_i$ is a
disk. Thus, the sequence of $\Sigma_i$ is ULSC and so
$\mathcal{S}=\mathcal{S}_{ulsc}$. Hence, after rotating if needed,
$\mathcal{L}=\set{x_3=t}_{t\in \Real}$ and $\mathcal{S}$ is parallel
to the $x_3$-axis and consists of either one or two lines. By Remark
\ref{twosingremark} and Lemma \ref{BndryCompBndProp} we see that
$\mathcal{S}$ must consist of exactly one singular line.
\begin{figure}

\begin{center}
 \includegraphics[width=1.5in,bb=0 0 221 300]{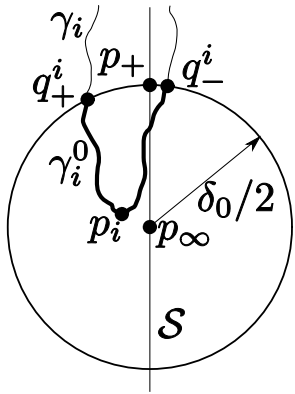}
\caption{The points of interest in the proof of Lemma
\ref{boundedcurvlem}.}\label{LowPointGammaFig}
\end{center}
\end{figure}
We next rule out any singular behavior. The injectivity bound at $0$
and the non-positive curvature of $\Sigma_i$ imply the existence of
$0\in\gamma'_i\subset \Sigma_i$, a geodesic lasso (i.e. a closed
curve that is a geodesic but for at most one point) with
$\ell(\gamma'_i)\leq 2 \Delta$.  The length bound implies
$\gamma'_i\subset B_{3\Delta}$. The fact that $\Sigma_i$ has
non-positive curvature implies that $\gamma'_i$ is not
null-homotopic.  We may minimize in the homotopy class of
$\gamma'_i$ to obtain a closed geodesic $\gamma_i$; note that Lemma
\ref{NoThinTubesLem} allows us to do this even though $\Sigma_i$ has
boundary that is not a priori convex.  Indeed, either $\gamma_i'$
intersects $\gamma_i$ and so $\gamma_i\subset B_{6\Delta}$ or, as
$\ell(\gamma_i)\leq \ell(\gamma_i')$, Lemma \ref{NoThinTubesLem}
implies  that $\dist_{\Sigma_i}(\gamma_i', \gamma_i)\leq C$ for some
large (but uniform) $C$, and so $\gamma_i\subset
B_{(C+6)\Delta}(0)$.

Now for $\epsilon>0$, let ${T}_\epsilon(\mathcal{S})$ be the
extrinsic $\epsilon$-tubular neighborhood of $\mathcal{S}$. Lemma
\ref{GeoInThinTubeLem} then implies there is an $i_\epsilon$ so that
for all $i\geq i_\epsilon$, $\gamma_i\subset
{T}_\epsilon(\mathcal{S})$.

For each $i$, fix $p_i\in \gamma_i$ so that
$x_3(p_i)=\min\set{x_3(p):p\in \gamma_i}$, i.e. the lowest point of
$\gamma_i$. Then a sub-sequence of the $p_i$ converge to $p_\infty
\in \mathcal{S}$.  Let $p_+$ be the point of intersection $\partial
B_{\delta_0/2}(p_\infty)\cap \mathcal{S}$, chosen so
$x_3(p_+)>x_3(p_\infty)$ with $\delta_0$ as before. Pick $i_0$ large
enough so for $i\geq i_0$, $|p_\infty-p_i|\leq \delta_0/4$. The
choice of $\delta_0$ implies that $\gamma_i$ is not contained in
$B_{\delta_0/2}(p_\infty)$; but for $i\geq i_0$, $\gamma_i$ meets
this ball.  Let $\gamma_i^0$ be an arc of $\gamma_i$ in
$B_{\delta_0/2}(p_\infty)$ through $p_i$. Denote $\partial
\gamma_i^0=\{q_i^+,q_i^-\} \subset\partial B_{\delta_0/2}(p_\infty)$
(see Figure \ref{LowPointGammaFig}). For $i\geq i_0$,
$\dist_{\Sigma_i}(q_i^-,q_i^+)\geq\delta_0/3$.  Indeed, for $i\geq
i_0$, the length of $\gamma_i^0$ is bounded below by $\delta_0/2$,
as $|q_i^\pm-p_i|\geq\delta_0/4$.  On the other hand, the lower
bound on the injectivity radius implies either
$\dist_\Sigma(q_i^-,q_i^+)>1/2$ or both lie in a geodesically convex
region with $\gamma_i^0$ the minimizing geodesic connecting them.

By Lemma \ref{GeoInThinTubeLem}, for any $\delta>0$, there is an
$i_\delta \geq i_0$ so for $i\geq i_\delta$, $q_i^\pm \in B_{\delta}
(p_+)$. By the one-sided curvature estimate of \cite{CM4}, there
exist $c>1$ and $1>\epsilon>0$ so that if $\Sigma_1, \Sigma_2$ are
disjoint embedded disks in $B_{cR}$ with $\partial \Sigma_j \subset
\partial B_{cR}$ and $B_{\epsilon R}\cap \Sigma_j \neq \emptyset$,
then for all components $\Sigma_1'$ of $B_R\cap \Sigma_1$ that
intersect $B_{\epsilon R}$, $\sup_{\Sigma_1'} |A|^2 \leq R^{-2}$.
Thus, for $\delta$ such that $0<\frac{2c}{\epsilon}
\delta<\delta_0$, because $\lim_{i \to \infty} \sup_{\Sigma_i \cap
B_{\delta}(p_+)} |A|^2 \to \infty$, there is an $i_\delta'\geq
i_\delta$ so for $i\geq i_\delta'$ there is only one component of
$B_{c\delta/\epsilon}(p_+)\cap \Sigma_i$ that meets
$B_{\delta}(p_+)$.
And thus, there is a $\sigma_i \subset B_{c\delta/\epsilon}(p_+)\cap
\Sigma_i$ that connects $q_i^\pm$ (see Figure \ref{ChordArcFig}).

Choose $\gamma \delta = \frac{2c}{\epsilon} \delta<\delta_0$. Then,
for $i \geq i'_\delta$, $q_i^- \in B_{\gamma \delta}(q_i^+)$, and
thus the component $\Sigma^\delta_i$ of $B_{\gamma\delta}(q_i^+)\cap
\Sigma_i$ that contains $q_i^+$ is a disk containing $\sigma_i$ and
thus $q_i^-$.
Let $1>\delta_1>0$ be given by the weak chord arc bounds (see
\cite{CY}) and decrease $\delta$, if necessary, so $\gamma
\delta<\frac{1}{2} \delta_0 \delta_1$. Then,
$\mathcal{B}^{\Sigma_i}_{2\gamma\delta \delta_1^{-1}}(q_i^+)$ is a
disk, and so $\Sigma_i^\delta\subset
\mathcal{B}^{\Sigma_i}_{\gamma\delta \delta_1^{-1}}(q_i^+)$, by the
weak chord-arc bounds. Thus, for  $i>i'_\delta$ and $\delta$
sufficiently small, $\dist_{\Sigma_i} (q_i^+,q_i^-)<\gamma
\delta\delta_1^{-1}< \delta_0/3$, which gives a contradiction.

\end{proof}

\begin{figure}
 \centering
 \includegraphics[width=2.5in, bb=0 0 365 360]{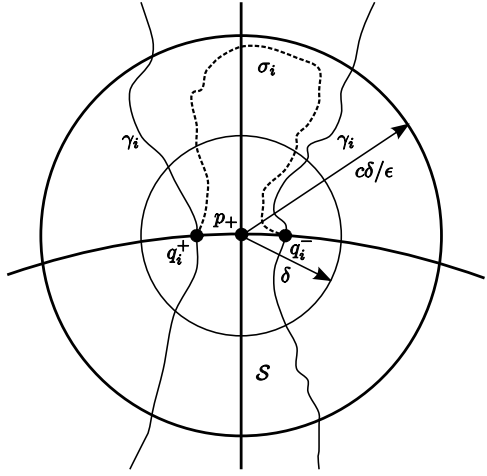}
 \caption{Illustrating the consequence of the one sided curvature estimates}
 \label{ChordArcFig}
\end{figure}

Theorem \ref{WeakCompThmInt} will follow from these uniform
curvature bounds as long as one further point is addressed.  Namely,
we must verify uniform area bounds for the sequence.  For embedded
minimal disks this follows immediately from Lemma 3.1 of \cite{CM2}.
For the more general surfaces we consider, one must use a type of
chord-arc bound, the details of which are in Lemma C.3 of
\cite{CM5}.

\begin{proof} (Theorem \ref{WeakCompThmInt})
By Lemma \ref{boundedcurvlem}, the curvature of the $\Sigma_i$ is
uniformly bounded on any compact subset of $\Real^3$.
Pick $R_j> \Delta$ with $R_j\to \infty$ and let $\Sigma_i^j$ denote
the connected component of $\Sigma_i\cap B_{R_j}(0)$ containing $0$.
Clearly, $\sup_{\Sigma_i^j} |A|^2 \leq C(j)$.  For each $j$, Lemma
C.3 of \cite{CM5} implies the area of $\Sigma_i^j$ is uniformly
bounded.  Hence, by standard compactness theory, for each $j$ a
sub-sequence of the $\Sigma_i^j$ converge smoothly to an embedded,
smooth minimal surface $\Sigma_\infty^j$ with $\partial
\Sigma_\infty^j\subset \partial B_{R_j}$.  Diagonalizing this
sequence, gives a properly embedded surface $\Sigma_\infty$ with
$\Sigma_i^{j_i} \to \Sigma_\infty$.

 Notice $\inj_{\Sigma_i^j} (0)\leq \Delta$ implies $\inj_{\Sigma_\infty}(0)\leq \Delta$, and thus $\Sigma_\infty$ is not a
disk. One may thus argue as in Appendix B of \cite{CM5} to see that
the $\Sigma_i^{j_i}$ converge to $\Sigma_\infty$ with multiplicity
1. Roughly speaking, if the convergence was with a higher degree of
multiplicity, one could construct a positive Jacobi function on
$\Sigma_\infty$ which would force $\Sigma_\infty$ to be stable (by
\cite{FSC}) and hence be a plane by \cite{SchoenStab}. The
convergence can only decrease the genus, so using
Lemma \ref{BndryCompBndProp} one sees $\Sigma_\infty\in
\mathcal{E}(e',g')$ where $0\leq g'\leq g$ and $0\leq e'\leq e+g$.
Finally, doing the same with other components of $B_{R_j}\cap
\Sigma_i$ (if they exist) and noting that the only disjoint
complete, properly embedded minimal surfaces of finite topology are
parallel planes, one has $\Sigma_i \to \Sigma_\infty$.
\end{proof}

\section{Proof of Theorem \ref{CpctnessCorg1}}

\subsection{Scale of the genus} \label{TopDefSec}

We define the extrinsic scale(s)
of the genus:
\begin{defn}\label{outerScaleDef}
For $\Sigma\in \mathcal{E}(1,g,R)$ let
\begin{multline*}
    r_+(\Sigma)=\inf_{x\in B_R}\inf\left\{ r: B_r(x)\subset B_R \right.\mbox{and}  \left. B_r( x) \cap \Sigma \mbox{ has a component of genus $g$} \right\}.
\end{multline*}
We call $r_+(\Sigma)$ the \emph{outer extrinsic scale of the genus}
of $\Sigma$. Furthermore, suppose for all $\epsilon>0$, one of the
components of $B_{r_+(\Sigma)+\epsilon}(x)\cap \Sigma$ has genus
$g$; then we say the genus is \emph{centered at} $x$.
\end{defn}

The outer scale of the genus measures how spread out all the handles
are and the center of the genus should be thought of as a ``center
of mass" of the handles. We also need to measure the scale of
individual handles and where the smallest handle (with respect to
this scale) is located. To that end define:
\begin{defn} \label{InnerScaleLocDef}
For $\Sigma\in \mathcal{E}(1,g,R)$ and $x\in B_{R}(0)$ let
\begin{equation*} r_-(\Sigma,x)=\sup \left\{ r: B_r(x)\subset
B_R(0) \mbox{ and } B_r( x) \cap \Sigma \mbox{ is genus zero} \right\}.
\end{equation*}
If the genus of
$B_r(x)\cap \Sigma$ is zero whenever $B_r(x)\subset B_R(0)$,  set
$r_-(\Sigma,x)=\infty$.
\end{defn}
\begin{defn}\label{innerScaleDef}
For $\Sigma\in \mathcal{E}(1,g,R)$ let
\begin{equation*}
    r_-(\Sigma)=\inf_{x\in B_R(0)} r_-(\Sigma,x).
\end{equation*}
We call $r_-(\Sigma)$ the \emph{inner extrinsic scale of the genus}
of $\Sigma$.
\end{defn}
\begin{rem}
One easily checks that $R>r_+(\Sigma)\geq r_-(\Sigma)$ for $\Sigma\in
\mathcal{E}(1,g,R)$, with equality holding if $g=1$. When $g=1$ we denote the common value by $r(\Sigma)$.
\end{rem}

\subsection{Two-sided bounds on the genus}\label{twosidedsec}
Using the no-mixing theorem of \cite{CM5}, we show that uniform
control on both the inner and outer extrinsic scales of the genus
implies a uniform lower bound on the injectivity radius. By the weak
chord-arc bounds of \cite{CY}, uniform extrinsic control also gives
a point with a uniform upper bound on its injectivity radius. We
then apply Theorem \ref{WeakCompThmInt} to obtain smooth
sub-sequential convergence to some complete embedded minimal surface
in $\Real^3$. The uniform bound on the outer scale of the genus
implies this limit must be a genus $g$-helicoid.



\begin{lem}\label{injectivelb} Fix $0<\alpha\leq 1$ and
$g\in \mathbb{Z}^+$. Then there exists $R_0>1$ and $1>\delta_0>0$
depending only on $\alpha$ and $g$ so: If $\Sigma\in
\mathcal{E}(1,g,R)$ with $R\geq R_0$, $1= r_-(\Sigma)\geq \alpha
r_+(\Sigma)$, and the genus of $\Sigma$ is centered at $0$, then as
long as $B_{\delta_0}(x)\subset B_{R}$, every component of
$B_{\delta_0} (x)\cap \Sigma$ is a disk.
\end{lem}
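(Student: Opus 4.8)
The plan is to argue by contradiction, using the lamination theory of Colding and Minicozzi together with the two-sided control on the scale of the genus. Suppose no such $R_0$ and $\delta_0$ exist. Applying the negation with $R_0=i$ and $\delta_0=1/i$ produces a sequence $\Sigma_i\in\mathcal{E}(1,g,R_i)$ with $R_i\to\infty$, each satisfying $1=r_-(\Sigma_i)\geq \alpha r_+(\Sigma_i)$ with genus centered at $0$, together with points $x_i$ and radii $\delta_i\to 0$ so that $B_{\delta_i}(x_i)\subset B_{R_i}$ and some component $\Gamma_i$ of $B_{\delta_i}(x_i)\cap\Sigma_i$ fails to be a disk. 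Since $\delta_i<1=r_-(\Sigma_i)$, Definition \ref{innerScaleDef} forces $\Gamma_i$ to have genus zero; as it is not a disk, $\partial\Gamma_i$ has at least two components. In other words, the sequence fails to be ULSC (Definition \ref{ULSCdef}) at the scales $\delta_i\to 0$.

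First I would dispose of the case of uniformly bounded curvature. If, along a subsequence, $\sup_{B_\rho(x_i)\cap\Sigma_i}|A|^2$ is bounded for some fixed $\rho>0$, then $\delta_i^2\sup|A|^2\to 0$, and the graphical estimate recalled in the proof of Lemma \ref{NoThinTubesLem} (see Section 2 of \cite{CM2}), combined with the weak chord-arc bounds of \cite{CY}, shows that each component of $B_{\delta_i}(x_i)\cap\Sigma_i$ is a small-gradient graph over a plane, hence a disk -- contradicting the choice of $\Gamma_i$. Consequently the curvature must blow up near $x_i$, and I would apply Theorem \ref{CM509Thm} to pass to a subsequence converging to a singular minimal lamination $\mathcal{L}$ with non-empty singular set $\mathcal{S}$.

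The heart of the argument is the no-mixing dichotomy $\mathcal{S}=\mathcal{S}_{ulsc}$ or $\mathcal{S}=\mathcal{S}_{neck}$. If $\mathcal{S}=\mathcal{S}_{ulsc}$, then the sequence is ULSC near $\mathcal{S}$ at a uniform extrinsic scale, while away from $\mathcal{S}$ the convergence is smooth with locally bounded curvature; in either region, sufficiently small extrinsic balls meet $\Sigma_i$ only in disks, contradicting the existence of $\Gamma_i$. Hence $\mathcal{S}$ must contain a non-ULSC point $p$. By the structure of the convergence in \cite{CM5}, the genus concentrates at such a $p$: for every $\rho>0$, a component of $B_\rho(p)\cap\Sigma_i$ has positive genus once $i$ is large. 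Fixing $\rho<1$ (and using $R_i\to\infty$ to ensure $B_\rho(p)\subset B_{R_i}$) then gives $r_-(\Sigma_i,p)\leq \rho<1$, so $r_-(\Sigma_i)<1$ for large $i$, contradicting the normalization $r_-(\Sigma_i)=1$.

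The main obstacle is localizing the failure correctly, i.e. guaranteeing that the blow-up point $x_i$ giving the non-disk component actually sits near the forbidden genus concentration rather than escaping into the helicoidal end where $x_i$ would no longer lie in a fixed compact set. This is exactly where the two-sided control enters: the bound $r_+(\Sigma_i)\leq \alpha^{-1}$ together with the genus being centered at $0$ confines all of the genus to the fixed ball $B_{1/\alpha}(0)$, so that the neck-type singular points of $\mathcal{L}$ -- the only points near which ULSC can fail at scale $\delta_i\to 0$ -- must lie within bounded distance of $0$. I would use this, together with Proposition \ref{BndryCompBndProp} and Remark \ref{twosingremark} to bound the number and location of the singular lines (as in the proof of Lemma \ref{boundedcurvlem}), to force $x_i$ into a fixed compact set. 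Once this localization is secured, Theorem \ref{CM509Thm} applies on that compact set and the two cases above close the argument, yielding the desired uniform $R_0$ and $\delta_0$.
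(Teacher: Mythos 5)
Your contradiction setup, the disposal of the bounded-curvature case, and the localization of the bad points $x_i$ near the genus all match the paper's proof. The fatal problem is your treatment of the neck alternative, which is the actual content of the lemma. You claim that at a non-ULSC point $p$ of $\mathcal{S}$ ``the genus concentrates,'' so that a component of $B_\rho(p)\cap\Sigma_i$ has positive genus for large $i$, contradicting $r_-(\Sigma_i)=1$. This is not what Definition \ref{NeckDef} says, and it is backwards relative to the model examples: a neck point is one near which, at scales $r_i\to 0$, some component has \emph{disconnected boundary} while the genus at scale $r_i$ equals the genus at the fixed scale $r$ --- in the present situation both are zero, precisely because $r_-(\Sigma_i)=1$. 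The homothetic blow-down of a catenoid has $0\in\mathcal{S}_{neck}$ with every ball meeting the surfaces in genus-zero components; conversely, genus concentration (blow-down of a genus-one helicoid) produces a point of $\mathcal{S}_{ulsc}$, not of $\mathcal{S}_{neck}$. So catenoid-style neck degeneration is perfectly compatible with the normalization $r_-(\Sigma_i)=1$, no contradiction of the kind you describe can arise, and ruling out exactly this degeneration is the whole difficulty of the lemma.

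The paper closes this case by a different mechanism. Having shown (as you essentially do) that the limit $x_\infty$ of the bad balls lies in $\mathcal{S}_{neck}$, it uses the no-mixing theorem together with the fact that balls disjoint from $B_{\alpha^{-1}}(0)$ meet $\Sigma_i$ only in disks to confine $\mathcal{S}$, and hence the whole lamination, to the slab $\set{|x_3|\leq \alpha^{-1}}$. It then rescales, setting $\tilde{\Sigma}_k=\frac{1}{k}\Sigma_{i_k}$, producing a sequence whose genus is still centered at $0$ but with $r_+(\tilde{\Sigma}_k)\to 0$ and which collapses into ever flatter slabs; now genus concentration forces $0\in\mathcal{S}_{ulsc}$ for the rescaled sequence, so Theorem \ref{CM509Thm} would make the limit a foliation of all of $\Real^3$ by parallel planes --- impossible, since $\tilde{\Sigma}_l\cap B_1$ Hausdorff-converges to the single plane $\set{x_3=0}\cap B_1$. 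Without this (or some equivalent) argument your proof does not close. A secondary issue: to produce the initial singular lamination you invoke Theorem \ref{CM509Thm}, but that theorem presupposes the sequence is ULSC, which is exactly what your contradiction hypothesis denies; the correct tool there is the general lamination theorem, Theorem 0.14 of \cite{CM5}, which is what the paper uses.
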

\begin{proof}
Suppose the lemma was not true. Then, there exists a sequence of
surfaces $\Sigma_i \in \mathcal{E}(1,g,R_i)$ with
$1=r_-(\Sigma_i)\geq \alpha r_+(\Sigma_i)$, $R_i\to \infty$, and the
genus of $\Sigma_i$ centered at $0$. Further, there exist points
$x_i$ and a sequence $\delta_i \to 0$ so that one of the components
of $B_{\delta_i}(x_i)\cap \Sigma_i$ is not a disk.
Notice for fixed $x$ and $r$, if $\bar{B}_r(x)\cap
\bar{B}_{\alpha^{-1}}(0)=\emptyset$ with $i$ large enough so
$B_r(x)\subset B_{R_i}$, then each component of $B_r(x)\cap
\Sigma_i$ is a disk. Thus, we may assume $x_i\in
B_{3/2\alpha^{-1}}$. Because uniform curvature bounds would imply
a uniform extrinsic scale on which $\Sigma_i$ is graphical, we see
that $\sup_{B_{2\alpha^{-1}} \cap \Sigma_i} |A|^2 \to \infty$.
Hence, by Theorem 0.14 of \cite{CM5}, up to to a sub-sequence, the
$\Sigma_i$ convergence to a singular lamination $\mathcal{L}$.

Let us now determine $\mathcal{L}$ and see that this gives a
contradiction.  By possibly passing to a further sub-sequence, we may
assume that $x_i\to x_\infty$.  Pick $i_0$ large enough so that
$|x_i-x_\infty|+\delta_i\leq 1/8$ for all $i\geq i_0$. Then for
$i\geq i_0$, $B_{\delta_i}(x_i)\subset B_{1/2} (x_\infty)$ and thus
$B_{1/2} (x_\infty)\cap \Sigma_i$ contains a non-disk
component. As $r_-(\Sigma_i)>1/2$, this component has genus zero. Hence, the boundary of this component is not
connected, and so $x_\infty$ is a point of
$\mathcal{S}_{neck}$ of the lamination $\mathcal{L}$ (see Definition \ref{NeckDef}).

For any ball $B_r(x)$ with $\bar{B}_r(x)\cap
\bar{B}_{\alpha^{-1}}(0)=\emptyset$ and $i$ large enough so
$B_r(x)\subset B_{R_i/2}(0)$, one has that all components of
$B_r(x)\cap \Sigma_i$ are disks, and thus the maximum principle and
the no-mixing theorem of \cite{CM5} imply that the singular set
$\mathcal{S}$ of $\mathcal{L}$ is contained in $B_{\alpha^{-1}}(0)$.
As a consequence, one may rotate so that $\mathcal{L} \subset
\set{|x_3|\leq \alpha^{-1}}$. Thus, for any $k\in \mathbb{N}$ there
is an $i_k$ so that for $i\geq i_k$, $\Sigma_{i}\cap B_{k}\subset \{|x_3| \leq
2\alpha^{-1}\}$ and $R_{i}> k^2$.  Now set
$\tilde{\Sigma}_k=\frac{1}{k} \Sigma_{i_k}$, so $\tilde{\Sigma}_k$
is a new sequence with the genus still centered at $0$, $\partial
\tilde{\Sigma}_k \cap B_{k}(0)=\emptyset$,
$r_+(\tilde{\Sigma}_k)\leq \alpha^{-1}/k\to 0$, and
$\tilde{\Sigma}_{l}\cap B_1\subset \{|x_3| \leq 2(\alpha k)^{-1}\}$ for $l\geq
k$. Clearly the curvature in $B_{2\alpha^{-1}}$ is still blowing
up and so by possible passing to a sub-sequence we have convergence
to a singular lamination $\tilde{\mathcal{L}}$. But
$r_{+}(\tilde{\Sigma}_k)\to 0$ implies that $0\in
\mathcal{S}_{ulsc}$. However, $\tilde{\Sigma}_l \cap B_1$ converges (in a Hausdorff sense) to $\set{x_3=0}\cap B_1$, contradicting the convergence of Theorem 0.9 of
\cite{CM5}.
\end{proof}

We now bound the injectivity radius of $\Sigma$ above in terms of $r_+(\Sigma)$.

\begin{lem} \label{InjUBLem}
There exists $R_0,\Delta$ with $R_0\geq 5\Delta>10>0$, so: If
$\Sigma\in \mathcal{E}(1,g,R)$, where $R>R_0$, and one of the
components, $\Sigma'$, of $\Sigma\cap B_{2}(0)$ has positive genus,
then the injectivity radius at all points of $\Sigma'$ is bounded
above by $\Delta$.
\end{lem}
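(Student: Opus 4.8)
The plan is to argue directly, without passing to a sequence, by exploiting that a minimal surface has non-positive curvature: a large injectivity radius forces a large embedded \emph{intrinsic} disk, and the positive-genus handle of $\Sigma'$ cannot fit inside a disk. Concretely, I would let $\delta_1>0$ be the universal constant from the weak chord-arc bounds (Proposition 1.1 of \cite{CY}), set $\Delta=\max\{3,\,8/\delta_1\}$ and $R_0=5\Delta$, and suppose toward a contradiction that some $p\in\Sigma'$ has $\inj_{\Sigma}(p)>\Delta$.

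First I would check that the intrinsic ball $\mathcal{B}^\Sigma_\Delta(p)$ is an embedded minimal disk disjoint from $\partial\Sigma$. Since the Euclidean distance between two points never exceeds their intrinsic distance, every point of $\mathcal{B}^\Sigma_\Delta(p)$ lies within Euclidean distance $\Delta$ of $p$, and as $p\in\Sigma'\subset B_2(0)$ this places $\mathcal{B}^\Sigma_\Delta(p)\subset B_{\Delta+2}(0)\subset B_{5\Delta}(0)=B_{R_0}\subset B_R$; since $\partial\Sigma\subset\partial B_R$, the ball avoids the boundary. Because $\Sigma$ is minimal it has $K\le 0$, so there are no conjugate points and the only obstruction to injectivity on an interior ball is a geodesic loop; hence $\inj_{\Sigma}(p)>\Delta$ means exactly that $\mathcal{B}^\Sigma_\Delta(p)$ is an embedded disk.

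Next I would apply the weak chord-arc bounds to this disk. As $\mathcal{B}^\Sigma_\Delta(p)$ is a disk, Proposition 1.1 of \cite{CY} yields that the component $\Omega$ of $B_{\delta_1\Delta/2}(p)\cap\Sigma$ containing $p$ satisfies $\Omega\subseteq\mathcal{B}^\Sigma_{\Delta/2}(p)$, which is itself a disk. The role of the chord-arc estimate is to convert the merely \emph{extrinsic} smallness of $\Sigma'$ into \emph{intrinsic} smallness. By the choice of $\Delta$ we have $\delta_1\Delta/2\ge 4$, so $B_2(0)\subset B_4(p)\subset B_{\delta_1\Delta/2}(p)$; and since $\Sigma'$ is connected, contains $p$, and lies in $B_2(0)$, it is a connected subset of $\Sigma\cap B_{\delta_1\Delta/2}(p)$ through $p$, whence $\Sigma'\subseteq\Omega\subseteq\mathcal{B}^\Sigma_{\Delta/2}(p)$.

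Finally I would invoke the topological fact that a subsurface of a disk has genus zero: $\Sigma'$ embeds in the topological disk $\mathcal{B}^\Sigma_{\Delta/2}(p)$, forcing $\Sigma'$ to have genus zero and contradicting the hypothesis that $\Sigma'$ has positive genus. This contradiction gives $\inj_{\Sigma}(p)\le\Delta$ for every $p\in\Sigma'$, which is the assertion (the relations $R_0=5\Delta$ and $\Delta>2$ ensure $R_0\ge 5\Delta>10$). The main obstacle — and the only genuinely nontrivial input — is the chord-arc step: without it one knows only that $\Sigma'$ is extrinsically close to $p$, which does not trap it inside an intrinsic disk, since \emph{a priori} the surface could fold so that the handle sits intrinsically far from $p$ while remaining in $B_2(0)$. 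Everything else is bookkeeping with the constants, chosen so that the intrinsic ball stays interior ($R_0=5\Delta$) and the extrinsic chord-arc ball engulfs $B_2(0)$ ($\delta_1\Delta/2\ge 4$).
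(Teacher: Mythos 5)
Your proof is correct and takes essentially the same approach as the paper's: argue by contradiction, use non-positive curvature to make $\mathcal{B}^\Sigma_\Delta(p)$ an embedded disk disjoint from $\partial\Sigma$, apply the weak chord-arc bounds of \cite{CY} to trap the connected component $\Sigma'$ inside the intrinsic disk $\mathcal{B}^\Sigma_{\Delta/2}(p)$, and derive a contradiction with positive genus. The only differences are cosmetic: the paper normalizes $\Delta=4/\delta_1$ so the chord-arc ball is $B_4(x)$, and it phrases the final topological step via the maximum principle ($\Sigma_{x,4}$ is itself a disk) rather than directly via monotonicity of genus under inclusion.
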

\begin{proof}
Pick $0< \delta_1 \leq 1/2$ as in the weak chord-arc bounds of
\cite{CY}. We claim that we may choose $\Delta={4/\delta_1}\geq 3$
and $R_0=5 \Delta$.  To see this, suppose that $\Sigma$ satisfies
the hypotheses of the lemma for some $R>R_0$, and $\Sigma'$ is a
component of $\Sigma\cap B_{2}$ with positive genus, but the
injectivity radius of some point $x\in \Sigma'$ is (strictly)
bounded below by $\Delta$. Then $\mathcal{B}^\Sigma_{\Delta}(x)$ is
disjoint from the boundary of $\Sigma$ and is topologically a disk.
Thus, the weak chord-arc bounds of \cite{CY} imply that the
component of $B_4(x)\cap \Sigma$ containing $x$, $\Sigma_{x,4}$, is
contained in the disk $\mathcal{B}^\Sigma_{\Delta/2}(x)$. The
maximum principle implies $\Sigma_{x, 4}$ is itself a disk; but
$\Sigma'\subset \Sigma_{x,4}$, which gives a contradiction.
\end{proof}
\begin{cor}\label{InjectiveubCor}
Fix $1\geq\alpha>0$ and let $\Sigma\in \mathcal{E}(1,g,R)$ and
suppose that $1=r_-(\Sigma)\geq \alpha r_+(\Sigma)$, $R\geq
R_0\alpha^{-1}$, the genus is centered at $0$, and $R_0,\Delta$
are as above.  Then there is a point $p_0\in \Sigma\cap
B_{\alpha^{-1}} $ with $\inj_{\Sigma} (p_0)\leq
\Delta\alpha^{-1}$.
\end{cor}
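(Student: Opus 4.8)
The plan is to derive Corollary \ref{InjectiveubCor} as a direct consequence of Lemma \ref{InjUBLem} after a rescaling that normalizes the problem. The hypotheses give $1 = r_-(\Sigma) \geq \alpha r_+(\Sigma)$, so $r_+(\Sigma) \leq \alpha^{-1}$, and the genus is centered at $0$. By Definition \ref{outerScaleDef}, centering at $0$ means that for every $\eps > 0$, some component of $B_{r_+(\Sigma)+\eps}(0) \cap \Sigma$ has genus $g$. Taking $\eps$ small enough that $r_+(\Sigma) + \eps \leq \alpha^{-1}$, I obtain a component of $B_{\alpha^{-1}}(0) \cap \Sigma$ that has positive genus. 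This is the key structural fact: there is a positive-genus component of $\Sigma$ sitting inside the ball $B_{\alpha^{-1}}(0)$.

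Next I would rescale to reduce to the normalization used in Lemma \ref{InjUBLem}, where the distinguished component lives in $B_2(0)$. Set $\tilde{\Sigma} = \frac{2}{\alpha^{-1}} \Sigma = 2\alpha \Sigma$, so that $\tilde{\Sigma} \in \mathcal{E}(1,g,\tilde{R})$ with $\tilde{R} = 2\alpha R$. The positive-genus component found above rescales to a positive-genus component $\tilde{\Sigma}'$ of $\tilde{\Sigma} \cap B_2(0)$. To apply Lemma \ref{InjUBLem}, I must check $\tilde{R} > R_0$: since $R \geq R_0 \alpha^{-1}$, we get $\tilde{R} = 2\alpha R \geq 2 R_0 > R_0$, so the hypothesis is satisfied. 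Lemma \ref{InjUBLem} then yields that the injectivity radius of $\tilde{\Sigma}$ at every point of $\tilde{\Sigma}'$ is at most $\Delta$.

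Finally I would unwind the scaling. Injectivity radius scales linearly: a point $\tilde{p} \in \tilde{\Sigma}'$ with $\inj_{\tilde{\Sigma}}(\tilde{p}) \leq \Delta$ corresponds to a point $p_0 \in \Sigma'$ in the original surface with $\inj_{\Sigma}(p_0) = \frac{1}{2\alpha} \inj_{\tilde{\Sigma}}(\tilde{p}) \leq \frac{\Delta}{2\alpha} \leq \Delta \alpha^{-1}$. Moreover $\tilde{p} \in B_2(0)$ forces $p_0 \in B_{\alpha^{-1}}(0) \subset B_{\alpha^{-1}}(0)$, matching the stated conclusion $p_0 \in \Sigma \cap B_{\alpha^{-1}}$. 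This produces the desired point.

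I expect no serious obstacle here; the corollary is essentially a bookkeeping exercise combining the centering definition with the scale-invariance of the injectivity radius and of the class $\mathcal{E}(1,g,R)$. The only points requiring genuine care are verifying that the centering hypothesis actually delivers a positive-genus component inside a \emph{closed} ball of radius $\alpha^{-1}$ (one takes a limit of the $\eps \to 0$ components, using that genus is a topological invariant stable under the nested intersections), and confirming the precise constant in the rescaling so that the rescaled radius comfortably exceeds $R_0$. Both are handled by the inequality $R \geq R_0 \alpha^{-1}$ built into the hypotheses.
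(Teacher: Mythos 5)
Your rescaling argument is exactly the intended derivation: the paper states this corollary without a separate proof precisely because it follows from Lemma \ref{InjUBLem} by the homothety you perform, and your bookkeeping is correct --- $\tilde\Sigma = 2\alpha\Sigma \in \mathcal{E}(1,g,2\alpha R)$ with $2\alpha R \geq 2R_0 > R_0$, the lemma gives $\inj_{\tilde\Sigma} \leq \Delta$ on a positive-genus component of $\tilde\Sigma \cap B_2(0)$, and undoing the homothety produces $p_0 \in \Sigma\cap B_{\alpha^{-1}}$ with $\inj_\Sigma(p_0) \leq \Delta/(2\alpha) \leq \Delta\alpha^{-1}$.

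The one step that needs more than you give it is the one you flag yourself: producing a positive-genus component of $B_{\alpha^{-1}}(0)\cap\Sigma$. Your argument requires $r_+(\Sigma)+\eps \leq \alpha^{-1}$ for some $\eps>0$, i.e.\ the \emph{strict} inequality $r_+(\Sigma) < \alpha^{-1}$, whereas the hypothesis $1 = r_-(\Sigma) \geq \alpha r_+(\Sigma)$ only gives $r_+(\Sigma) \leq \alpha^{-1}$. In the borderline case $r_+(\Sigma) = \alpha^{-1}$, your fallback --- that genus is ``stable under nested intersections'' --- is not a valid topological principle. For instance, a smooth torus can lie entirely outside the closed ball $\overline{B}_\rho(0)$ and be tangent to $\partial B_\rho(0)$ exactly along the union of a meridian and a longitude; then for every $\eps>0$ the intersection $B_{\rho+\eps}(0)\cap\Sigma$ has a genus-one component (a neighborhood of the two crossing curves, i.e.\ the torus minus a disk), while $B_\rho(0)\cap\Sigma$ is empty and $\overline{B}_\rho(0)\cap\Sigma$ is one-dimensional: the genus is lost entirely in the nested limit. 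What rescues the corollary is not topology but one of two inputs. Either invoke the paper's standing convention that $\partial B_r(x)$ is always assumed to meet $\Sigma$ transversely: transversality at radius $\alpha^{-1}$ excludes the tangential picture, and then by the collar structure the genus-$g$ component of $B_{\alpha^{-1}+\eps}(0)\cap\Sigma$ for small $\eps$ corresponds to a genus-$g$ component of $B_{\alpha^{-1}}(0)\cap\Sigma$. Or argue via the maximum principle, using minimality: if the nested genus-$g$ components all avoided $B_{\alpha^{-1}}(0)$, they would collapse onto a set along which $\Sigma$ touches the sphere $\partial B_{\alpha^{-1}}(0)$ from outside; at a crossing point of two handle cycles the nonnegative height $w$ of $\Sigma$ over the sphere would have vanishing gradient and Hessian, contradicting the strictly positive right-hand side in the elliptic equation obtained by comparing the mean curvature of $\Sigma$ (zero) with that of the sphere ($2\alpha$). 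Either repair is short, but some such input is needed; as written, the parenthetical justification is false.
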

We can now prove our desired compactness result.
\begin{proof} (Theorem \ref{CpctnessCorg1})
By Lemma \ref{injectivelb}, the injectivity radius of the sequence
$\Sigma_i$ is uniformly bounded below by $\delta_0>0$. Moreover, by
Corollary \ref{InjectiveubCor} there is a point $p_i$ in the ball
$B_{\alpha^{-1}}(0)$ so that $\inj_{\Sigma_i} (p_i)\leq \Delta
\alpha^{-1}$. As a consequence, we may apply Theorem
\ref{WeakCompThmInt} and obtain  a sub-sequence of the $\Sigma_i$
that converges, with multiplicity one, uniformly in $C^\infty$ on compact subsets of
$\Real^3$ to some complete, embedded
non-simply connected minimal surface $\Sigma_\infty$.  The inner and outer control
on the genus and topology of the $\Sigma_i$ carry over to
the limit in the indicated manner.
\end{proof}

\section{Applications}
The compactness result developed in the previous section is
particularly strong for sequences of genus-one surfaces, as there is
only one scale for the genus. Indeed, there are several interesting
scales for genus-$g$ helicoids, the scale of the asymptotic helicoid
and the scale(s) of the genus. In principle, one might expect a
relationship between them. This is the case for genus one; however it is not clear
there is such a connection for $g\geq 2$ -- this is discussed in more detail in \cite{BBCompLam}.
\subsection{Compactness of $\mathcal{E}(1,1)$}\label{GlobalSec}
Let us now focus on the space $\mathcal{E}(1,1)$, i.e. genus-one
helicoids.  To prove Theorem \ref{MainCpctnessThm}, we show that for
any $\Sigma\in \mathcal{E}(1,1)$, there is an upper and lower bound
on the ratio between the scale of the genus and the scale of the
asymptotic helicoid.
To verify this bound and that the limit is also asymptotic to $H$,
we must make a connection between the local nature of the
convergence and global nature of the asymptotic scale. This is made
by evaluating certain path integrals of holomorphic Weierstrass
data. Indeed, we first establish a uniform $R$ such that all
vertical normals of each $\Sigma_i$ (after a rotation) occur in
$B_R(0)$.  We then find annular ends $\Gamma_i$, conformally mapped
to the same domain in $\mathbb{C}$ by $z_i=(x_3)_i+\sqrt{-1}
(x_3^*)_i$ and with Weierstrass data as described in Corollary 1.2
of \cite{BB2}. Finally, we use calculus of residues on $\partial
\Gamma_i$ to establish uniform control on the center and radius of
the genus for a sub-sequence.

Note that Theorem 1.1 of \cite{BB2} implies that for $\Sigma \in
\mathcal{E}(1,1)$, there are two points where the Gauss map points
parallel to the axis of the asymptotic helicoid. Thus, after
translating, rescaling and rotating, any $\Sigma\in
\mathcal{E}(1,1)$ will satisfy the conditions of the following
lemma.
\begin{lem} \label{PosZeroPoleLem}
Suppose $\Sigma\in \mathcal{E}(1,1)$, the genus of $\Sigma$ is
centered at $0$, $r(\Sigma)=1$, and that $g$, the usual
stereographic projection of the Gauss map of $\Sigma$, has only a
single pole and single zero. Then there is an $R\geq 1$ independent
of $\Sigma$ so that the pole and zero of $g$ lie in $B_R\cap
\Sigma$.
\end{lem}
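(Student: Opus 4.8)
The plan is to argue by contradiction, extracting a sequence whose pole or zero escapes to infinity and then invoking the compactness of Theorem \ref{CpctnessCorg1} to produce a limiting genus-one helicoid whose two vertical points sit at fixed, finite locations. Suppose no such $R$ exists. Then for each $j$ there is a surface in $\mathcal{E}(1,1)$ obeying the normalization (genus centered at $0$, $r=1$, and $g$ having a single pole and single zero) but with a vertical point of $g$ outside $B_j$. Relabeling produces a sequence $\Sigma_i\in\mathcal{E}(1,1)$ with all these properties for which, after passing to a sub-sequence and possibly interchanging the roles of the pole and the zero, the unique pole $p_i^+$ of $g_i$ (a point where the normal points in the $+x_3$ direction) satisfies $|p_i^+|\to\infty$.

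I would next set up Theorem \ref{CpctnessCorg1}. Each $\Sigma_i$ is complete, so for almost every radius $\rho$ the intersection $\partial B_\rho$ meets $\Sigma_i$ transversely; choosing $\rho_i\to\infty$ this way realizes $\Sigma_i\cap B_{\rho_i}\in\mathcal{E}(1,1,\rho_i)$. Since $r_-(\Sigma_i)=r_+(\Sigma_i)=r(\Sigma_i)=1$ (so we may take $\alpha=1$) and the genus is centered at $0$, Theorem \ref{CpctnessCorg1} yields a sub-sequence converging in $C^\infty$ on compact subsets of $\Real^3$, with multiplicity one, to some $\Sigma_\infty\in\mathcal{E}(1,1)$ with genus centered at $0$ and $r(\Sigma_\infty)=1$. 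By Theorem 1.1 of \cite{BB2}, the Gauss map of $\Sigma_\infty$ has exactly two vertical points, namely a single pole $p_\infty^+$ and a single zero $p_\infty^-$ of $g_\infty$, both at finite points; fix $R_0$ with $p_\infty^\pm\in B_{R_0}$.

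The crux is then a stability statement for the pole under the convergence. Near $p_\infty^+$ the tangent plane of $\Sigma_\infty$ is horizontal, so I would write $\Sigma_i$ and $\Sigma_\infty$ as graphs over a common horizontal disk; the multiplicity-one $C^\infty$ convergence of the surfaces gives $C^\infty$ convergence of the unit normals, hence of $g_i\to g_\infty$ as maps into $\mathbb{C}\cup\set{\infty}$. Because $g_\infty$ has a \emph{simple} pole at $p_\infty^+$, the map $1/g_\infty$ is a local biholomorphism there; equivalently the horizontal Gauss map $(n_1^\infty,n_2^\infty)$ has an isolated zero of local degree one at $p_\infty^+$. By homotopy invariance of the topological degree under the $C^0$-small perturbation $(n_1^i,n_2^i)\to(n_1^\infty,n_2^\infty)$, for all large $i$ the map $(n_1^i,n_2^i)$ has a zero in $B_{R_0+1}$, i.e. $\Sigma_i$ has an up-vertical point there. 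Since $g_i$ has a \emph{unique} pole, that point must be $p_i^+$, whence $p_i^+\in B_{R_0+1}$, contradicting $|p_i^+|\to\infty$. The zero is handled identically, using the simple zero of $g_\infty$ at $p_\infty^-$ in place of the pole. Setting $R=R_0+1$ then completes the proof.

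I expect the main obstacle to be making this last step rigorous: one must identify the nondegeneracy (local degree one) of the zero of the horizontal Gauss map with the simplicity of the pole of $g_\infty$, which is read off from the holomorphic Weierstrass data, and then transfer this across the convergence via degree theory. A secondary, but harmless, point is that $R_0$ depends on the extracted limit $\Sigma_\infty$ rather than directly on the sequence; this causes no difficulty, since the bound $R_0+1$ holds along the chosen sub-sequence for all large $i$, which already contradicts the escape of $p_i^+$ and therefore establishes the existence of a uniform $R$.
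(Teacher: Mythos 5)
Your contradiction setup and your final degree-theoretic step are both fine (the degree argument is the argument principle in disguise, and is essentially the computation the paper performs with $\int_{\sigma_i} dg_i/g_i$), but there is a genuine gap one step earlier, and it sits exactly where the main difficulty of the lemma lies. You claim that Theorem 1.1 of \cite{BB2} gives that $g_\infty$, the stereographic projection of the Gauss map of $\Sigma_\infty$ \emph{with respect to the fixed $x_3$-direction}, has a single simple pole and a single simple zero, both at finite points. What Theorem 1.1 of \cite{BB2} actually provides is two points where the normal of $\Sigma_\infty$ is parallel to the axis of \emph{its own} asymptotic helicoid $H_\infty$. Nothing in the hypotheses or in Theorem \ref{CpctnessCorg1} guarantees that this axis is the $x_3$-axis: the convergence $\Sigma_i\to\Sigma_\infty$ is only in $C^\infty$ on compact subsets of $\Real^3$, and such convergence does not a priori control the asymptotic behavior of the limit --- reconciling the local convergence with the global (asymptotic) data is precisely the issue this part of the paper is addressing. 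If the axis of $H_\infty$ were tilted relative to the $x_3$-axis, then on the helicoidal end the normal of $\Sigma_\infty$ would attain the vertical directions infinitely often, so $g_\infty$ would have infinitely many poles and zeros; your identification of ``the'' pole $p_\infty^+$ and ``the'' zero $p_\infty^-$, and the appeal to their simplicity, then collapses. So the statement you want about $g_\infty$ is not a citation; it is the thing to be proved.

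This is in fact what the bulk of the paper's proof does: it shows $g_\infty$ has at most one zero by a winding-number argument that transfers information from the approximating surfaces (each $g_i$ has exactly one zero and its pole lies outside $B_i$, so for disjoint disks $D_i^1,D_i^2\subset\Sigma_i$ around two putative zeros of $g_\infty$, at least one of $\int_{\sigma_i^j} dg_i/g_i$ must vanish, contradicting smooth convergence to $2\pi\sqrt{-1}$); it then invokes Proposition 4.2 of \cite{BB2} to conclude $g_\infty$ has exactly one pole, and only after that runs the residue contradiction around that pole. Your argument is reparable with less than the full structure: your last step only needs that $g_\infty$ has \emph{some} pole (of any order) at a finite point, and one can check this holds whether the axis of $H_\infty$ is vertical (by Theorem 1.1 of \cite{BB2}) or tilted (because the normal along the helicoidal end then attains the positive $x_3$-direction with nonzero local degree), after which Hurwitz/degree theory contradicts the escape of $p_i^+$. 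But that dichotomy, or the paper's winding argument, must actually be carried out; as written, your third step assumes the key point.
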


\begin{proof}
Suppose this was not the case. That is, one has a sequence
$\Sigma_i\in \mathcal{E}(1,1)$ so that the genus of each $\Sigma_i$
is centered at $0$, $r(\Sigma_i)=1$, and each $g_i$ has a single
pole and single zero, at least one of which does not lie in
$B_i$. By rotating $\Sigma_i$, we may assume the pole does not
lie in $B_i$.  By Theorem \ref{CpctnessCorg1}, a sub-sequence of
the $\Sigma_i$ converge uniformly in $C^\infty$ on compact subsets
of $\Real^3$, with multiplicity 1, to $\Sigma_\infty\in
\mathcal{E}(1,1)$ where $r(\Sigma_\infty)=1$.

Now consider
$g_\infty:\Sigma_\infty \to \mathbb{C}$; it has at least one zero.
Suppose $g_\infty$ has more than one zero and call two such zeros
$q_1,q_2 \in \Sigma_\infty$. For $\delta_0$ as in Lemma
\ref{injectivelb}, denote by $\sigma_\infty^j$ a closed, embedded
curve in the component of $B_{\delta_0/2}(q_j)\cap \Sigma_\infty$
that contains $q_j$, chosen so that $\sigma_\infty^j$ surrounds
$q_j$ but neither surrounds nor contains any other pole or zero of
$g_\infty$. Additionally, we choose the two $\sigma_\infty^j$ to be
disjoint. Thus, $\int_{\sigma_\infty^j}
\frac{dg_\infty}{g_\infty}=2\pi \sqrt{-1}$.

By the convergence, there are simple, closed curves
$\sigma_i^j$ ($j=1,2$) in $\Sigma_i$ so that $\sigma_i^j$ converges
smoothly to $\sigma_\infty^j$.  Let $D_i^j \subset \Sigma_i$ denote
a disk such that $\partial D_i^j = \sigma_i^j$. For large enough
$i$, $D_i^1 \cap D_i^2 = \emptyset$. There is at most one zero of
$g_i$ in $\cup_{j=1,2}D_i^j$ and the pole of $g_i$ lies outside
$B_i$. Thus, there exists $i'$ such that, for $i\geq i'$, either
$\int_{\sigma_i^1} \frac{dg_i}{g_i}=0$ or the same is true for
$\sigma_i^2$. Letting $i \to \infty$ gives a contradiction; so
$g_\infty$ has only one zero. By Proposition 4.2 of \cite{BB2},
$g_\infty$ has only one pole.

To conclude the proof, note there is an $R_1$ such that the
pole, $p_\infty$, of $g_\infty$ is in $B_{R_1} \cap \Sigma_\infty$.
Let $\sigma_\infty\subset \Sigma_\infty$ be a smooth, embedded, closed curve
in $B_{\delta_0/2}(p_\infty)$ that surrounds $p_\infty$ and neither
contains nor surrounds the zero of $g_\infty$. Thus,
$\int_{\sigma_\infty}\frac{dg_\infty}{g_\infty}=-2\pi\sqrt{-1}$.
Then by our convergence result, there are smooth, embedded, closed curves
$\sigma_i \subset \Sigma_i \cap B_{3\delta_0/4} (p_\infty)$, with
$\sigma_i$ converging to $\sigma_\infty$. Note $\sigma_i$ is
necessarily null-homotopic and
for $i$ large enough so $\sigma_\infty \subset B_{i/2} (0)$ we
compute $\frac{1}{2\pi \sqrt{-1}} \int_{\sigma_i}
\frac{dg_i}{g_i}\geq 0$.  As $i\to \infty$ the smooth convergence
provides a contradiction.
\end{proof}
Before proving, Theorem
\ref{MainCpctnessThm}, we establish the necessary uniform
control on the center and radius of the genus for surfaces in
$\mathcal{E}(1,1)$ that are asymptotic to a fixed helicoid $H$.
Throughout the following proof, we make repeated use of
\cite{BB2}.
\begin{lem}
Let $\Sigma_i \in \mathcal{E}(1,1)$ and suppose that all the
$\Sigma_i$ are asymptotic to the same helicoid, $H$, which has axis the $x_3$-axis.  Then, there
exist $C_1,C_2>0$ and a sub-sequence such
that $1/C_1 \leq r(\Sigma_{i})\leq C_1$ and, after a rotation,
$|x_1(p_{i})|+|x_2(p_i)| \leq C_2$, where $p_{i}$ is the center of
the genus of each $\Sigma_{i}$.
\end{lem}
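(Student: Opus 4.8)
The plan is to reduce the assertion to the convergence of the \emph{asymptotic} helicoids along a subsequence, and then to recover that convergence from residue computations on a contour lying in a region where the surfaces converge on compact subsets.

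First I would normalize. Rotation about the $x_3$-axis and translation along it both preserve $H$, so these remain available. Writing $p_i$ for the center of the genus (Definition \ref{outerScaleDef}) and $\rho_i=r(\Sigma_i)$, set $\tilde\Sigma_i=\rho_i^{-1}(\Sigma_i-p_i)$; then $r(\tilde\Sigma_i)=1$ and the genus of $\tilde\Sigma_i$ is centered at $0$. Truncating by $\tilde\Sigma_i\cap \bar B_{\tilde R_i}$ with $\tilde R_i\to\infty$ places us in the setting of Theorem \ref{CpctnessCorg1} (with $g=1$, $\alpha=1$), so a subsequence converges in $C^\infty$ on compact subsets, with multiplicity one, to some $\tilde\Sigma_\infty\in\mathcal{E}(1,1)$ with $r(\tilde\Sigma_\infty)=1$. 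By construction $\tilde\Sigma_i$ is asymptotic to $\tilde H_i=\rho_i^{-1}(H-p_i)$, a helicoid whose vertical period is $\rho_i^{-1}$ times that of $H$ and whose axis meets $\set{x_3=0}$ at the horizontal point $-\rho_i^{-1}(x_1(p_i),x_2(p_i))$. The key reduction is this: \emph{if} the $\tilde H_i$ converge to the asymptotic helicoid $\tilde H_\infty$ of $\tilde\Sigma_\infty$, then, since $\tilde H_\infty$ is a genuine helicoid (every element of $\mathcal{E}(1,1)$ is asymptotic to one, by \cite{BB2}), its vertical period is finite and positive. Matching periods then forces $\rho_i$ to converge to a positive finite limit, which gives $1/C_1\le r(\Sigma_i)\le C_1$, while matching axes bounds $\rho_i^{-1}(|x_1(p_i)|+|x_2(p_i)|)$, hence (as $\rho_i\le C_1$) bounds $|x_1(p_i)|+|x_2(p_i)|$ by some $C_2$.

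It remains to prove that convergence of the surfaces on compact subsets forces convergence of the asymptotic helicoids, and this is where the holomorphic data enters. Using Corollary 1.2 of \cite{BB2}, I would introduce the conformal coordinate $z_i=x_3+\sqrt{-1}\,x_3^*$ on the annular end $\Gamma_i$, in which the Weierstrass data $(g_i,dh_i)$ takes the normalized helicoidal form, the two vertical normals (the single zero and single pole of $g_i$, cf.\ Proposition 4.2 of \cite{BB2}) being the only such points; by Lemma \ref{PosZeroPoleLem} applied to $\tilde\Sigma_i$ these lie in a fixed ball $B_R$. I would then fix a contour $\partial\Gamma_i$ in an annular region $B_{2R}\setminus B_R$, i.e.\ \emph{inside} the region of smooth convergence $\tilde\Sigma_i\to\tilde\Sigma_\infty$ and outside the vertical normals. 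The vertical period and the axis of the asymptotic helicoid are recovered from contour integrals of the Weierstrass data over $\partial\Gamma_i$: the winding $\frac{1}{2\pi\sqrt{-1}}\oint_{\partial\Gamma_i}\frac{dg_i}{g_i}$ together with the height increments detects the vertical period, and the first moment $\oint_{\partial\Gamma_i}z_i\,\frac{dg_i}{g_i}$ together with the periods of $\frac12(g_i^{-1}-g_i)\,dh_i$ and $\frac{\sqrt{-1}}{2}(g_i^{-1}+g_i)\,dh_i$ locates the axis. Because $\partial\Gamma_i$ sits in the region of smooth convergence, all these integrals pass to the limit, so $\tilde H_i\to\tilde H_\infty$, completing the reduction.

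I expect the main obstacle to be exactly this last bridge: rigorously identifying the asymptotic data of the helicoid at the (infinite) end with residue and period integrals over a contour sitting at a \emph{finite} scale. This relies on the rigidity of the Weierstrass representation for ends in $\mathcal{E}(1,1)$ from \cite{BB2}---that the normalized end data is determined by the core---and on a careful choice of the contour, avoiding the vertical normals while lying in the region of convergence. A secondary point to verify is that $\tilde H_\infty$ is genuinely non-degenerate, i.e.\ has positive finite vertical period (equivalently, the two vertical normals of $\tilde\Sigma_\infty$ remain distinct, so the handle neither collapses nor spreads); this is guaranteed by $\tilde\Sigma_\infty\in\mathcal{E}(1,1)$, and it is precisely what rules out $r(\Sigma_i)\to 0$ and $r(\Sigma_i)\to\infty$.
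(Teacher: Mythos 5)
Your outline coincides with the paper's own proof: normalize so the genus is centered at $0$ with $r=1$, apply Theorem \ref{CpctnessCorg1} to get smooth, multiplicity-one convergence to $\tilde\Sigma_\infty\in\mathcal{E}(1,1)$, confine the zero and pole of $g_i$ to a fixed ball via Lemma \ref{PosZeroPoleLem}, pass to the conformal coordinate $z_i$ and the end data of Corollary 1.2 of \cite{BB2}, and read off the asymptotic helicoids from integrals over a fixed contour in the region of convergence. The gap lies in the step you yourself flag as the main obstacle, and the specific integrals you name would fail. On the annular end $\Gamma_i$ the function $f_i=\log g_i$ is single-valued (Section 4 of \cite{BB2}; equivalently, the compact core cut off by $\partial\Gamma_i$ contains exactly one zero and one pole of $g_i$), so your winding $\frac{1}{2\pi\sqrt{-1}}\oint_{\partial\Gamma_i}dg_i/g_i$ is identically zero; likewise $x_1,x_2$ are single-valued on the embedded end, so the closed-contour ``periods'' of $\frac12(g_i^{-1}-g_i)\,dh_i$ and $\frac{\sqrt{-1}}{2}(g_i^{-1}+g_i)\,dh_i$ carry no information about the axis. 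As written, your recipe extracts neither the scale nor the axis. What works, and what the paper does, is to integrate the single-valued logarithm with a \emph{weight}: writing $f_i(u)=\sqrt{-1}\lambda_i u+F_i(u)$ on $A_C$ with $F_i$ holomorphic and vanishing at infinity, one has $\int_\gamma f_i(u)\,du/u^2=-2\pi\lambda_i$, and smooth convergence of $f_i$ on the fixed circle $\gamma$ gives $\lambda_i\to\lambda_\infty>0$, which is exactly what produces $C_1$. (Your first moment $\oint z_i\,dg_i/g_i$ recovers, after integration by parts, the Laurent coefficient $a_{i,1}$ of $F_i$ --- useful, but it does not see $\lambda_i$.)

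The second, more serious, missing ingredient is the axis bound. The horizontal position of the asymptotic axis is not a closed-contour period at all: it is the limit of $(x_1,x_2)$ along the end, i.e. an improper integral of $dx_1-\sqrt{-1}\,dx_2=g_i^{-1}dh_i-\bar g_i\bar{dh}_i$ along a ray $t\in[3C,\infty)$ in the $u$-plane. After expanding, the integrand decays only like $\re a_{i,1}/t$, so this integral is not absolutely convergent; the paper controls it as an oscillating integral against $e^{-\sqrt{-1}\lambda_i t}$, and the resulting bound on the drift is uniform in $i$ precisely because $\lambda_i\to\lambda_\infty>0$ (the two halves of the lemma interact here). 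So the implication you assert --- ``the contour integrals pass to the limit, hence $\tilde H_i\to\tilde H_\infty$ and the axes stay bounded'' --- is not justified by convergence of finitely many (or even all) Laurent coefficients alone; it requires this uniform tail estimate, which is the real content of the paper's proof of the $C_2$ bound and is absent from your proposal.
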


\begin{proof}
Translate each $\Sigma_i$ by $-{p}_i$ so that the genus of
each of the $\Sigma_i$ is centered at 0.  Then rescale each
$\Sigma_i$ by $\alpha_i$ so that after the rescaling
$r(\Sigma_i)=1$. Thus, each rescaled and translated surface
$\Sigma_i$ is asymptotic to the helicoid $H_i=\alpha_i
(H-{p}_i)$.
By Theorem \ref{CpctnessCorg1}, passing to a sub-sequence,
$\Sigma_i$ converges uniformly on compact sets to $\Sigma_\infty\in
\mathcal{E}(1,1)$ with multiplicity one.  By Corollary 1.3 of
\cite{BB2}, $\Sigma_\infty$ is asymptotic to some helicoid
$H_\infty$. Since each $\Sigma_i$ is asymptotic to $H_i$, which has
vertical axis, the stereographic projection $g_i$ of the Gauss map
of $\Sigma_i$ has exactly one pole and one zero. Thus, as in the
proof of Lemma \ref{PosZeroPoleLem}, we can conclude the same for
$\Sigma_\infty$.

As each $\Sigma_i$ satisfies the hypotheses of Lemma
\ref{PosZeroPoleLem}, there exists an $R$ such that both the zero
and pole of $g_i$ ($i \leq \infty$) lie on the component,
$\Sigma_i^0$, of $B_{R}(0)\cap \Sigma_i$ containing the genus. Thus,
 $\Gamma_i=\Sigma_i\backslash \bar{\Sigma}_i^0$ is an annulus and $g_i$ has no poles or zeros in
$\Gamma_i$.  Hence, by the arguments of Section 4 of \cite{BB2},
$z_i=(x_3)_i+\sqrt{-1}(x_3)_i^*:\Gamma_i \to \mathbb{C}$ and
$f_i=\log g_i : \Gamma_i \to \mathbb{C}$ are well-defined. (Here
$(x_3)_i$ is $x_3$ restricted to $\Sigma_i$ and $(x_3)_i^*$ is the
harmonic conjugate of $(x_3)_i$.) Corollary 1.2 of \cite{BB2} tells
us that $f_i(p)=\sqrt{-1} \lambda_i z_i(p)+F_i(p)$ where $\lambda_i$
determines the scale of $H_i$ and $F_i(p)$ is
holomorphic on $\Gamma_i$ and extends holomorphically to
$\infty$ with $F(\infty)=0$.

Let $A_C= \set{z\in \mathbb{C}: |z|>C}$. By suitably translating
$(x_3)_i^*$, $A_{C/2} \subset z_\infty(\Gamma_\infty)$ for $C>0$;
thus there exists $i_0$ such that, for $i_0\leq i \leq \infty$,
 $A_C \subset z_i(\Gamma_i).$
By precomposing with $z_i^{-1}$, we may think of $f_i$ as a
holomorphic function on $A_C$. If $u$ is the standard coordinate of
$\mathbb{C}$ restricted to $A_C$, then for $i_0\leq i \leq \infty$,
$f_i(u)=\sqrt{-1} \lambda_i u+F_i(u)$ where $F_i$ is holomorphic and extends holomorphically and with a zero to
$\infty$.
There is an $R'> R$ so that
$\set{p\in\Gamma_\infty:|z_\infty(p)|\leq 2C}\subset B_{R'}$;
thus, by perhaps increasing $i_0$, for $i_0\leq i \leq \infty$,
$\set{p\in\Gamma_i:|z_i(p)|\leq 2C}\subset B_{2R'}$. As a
consequence, the uniform convergence of $\Sigma_i$ to
$\Sigma_\infty$ implies that for $\gamma=\set{u:|u|=\frac{3}{2}
C}\subset A_C$, $f_i\to f_\infty$ in $C^\infty(\gamma)$.  On the
other hand, the calculus of residues implies that for $i_0\leq i
\leq \infty$,
\begin{equation} \label{ScaleIntegral}
 \int_\gamma f_i(u) \frac{du}{u^2} =-2\pi \lambda_i
\end{equation}
and hence we see immediately that $\lambda_i \to \lambda_\infty>0$.
Since the initial helicoid $H$ had some $\lambda_H$ associated with
its Weierstrass data, this gives an upper and lower bound on the
rescaling of each initial $\Sigma_i$, thus producing the necessary
$C_1$.

Since each $F_i$ is holomorphic on $A_C$ with a holomorphic
extension to $\infty$ (and a zero there), we can expand in a Laurent
series, i.e. for every $u\in A_C$ one has
\begin{equation}
 F_i(u)=\sum_{j=1}^\infty \frac{a_{i,j}}{u^j}
\end{equation}
where this is a convergent sum.
Thus, for $i_0\leq i\leq \infty$
\begin{equation}
 \int_\gamma f_i(u) u^{j-1} du =2\pi\sqrt{-1} a_{i,j}
\end{equation}
and hence $\lim_{i\to \infty} a_{i,j}=a_{\infty,j}$. Thus, $F_i\to
F_\infty$ uniformly in $\set{u: 2C\leq |u|\leq \infty}$.

The Weierstrass representation implies that $H_i$ converge to
$H_\infty$ and hence that $x_1({p}_i)\to x_1({p}_\infty)$ and
$x_2({p}_i)\to x_2({p}_\infty)$.  This produces the necessary bound,
$C_2$, on the distance between the center of the genus and the axis
of the initial helicoid $H$. Indeed, the convergence $F_i \to
F_\infty$ implies there is a uniform $C_0$ so that for $u>2C$ and
$i_0\leq i\leq \infty$, one can write
$F_i(u)=\frac{a_{i,1}}{u}+\tilde{F}_i(u)$, where $|\tilde{F}_i
(u)|\leq \frac{C_0}{u^2}$ and $|a_{i,1}|\leq C_0$.  Recall the
Weierstrass representation gives that
\begin{equation}
dx_1-\sqrt{-1} dx_2 =g^{-1} dh-\bar{g}\bar{dh},
\end{equation}
where $dh=dx_3+\sqrt{-1}dx_3^*$ is the height differential.
Integrating this form along $\set{t+(\sqrt{-1})0: t\in
[3C,t_1]}\subset A_C$, gives:
\begin{equation}
((x_1)_i(t_1)-(x_1)_i(3C))-\sqrt{-1}
((x_2)_i(t_1)-(x_2)_i(3C))=\int_{3C}^{t_1}  J_i(t) dt
\end{equation}
where $J_i(t)=e^{-\sqrt{-1}(\lambda_i t+\im F_i(t))}(e^{-\re
F_i(t)}-e^{\re F_i(t)}) $. For $C$ sufficiently large, $|F_i(t)|\leq
1/2$ and so expanding in a power series,
\begin{equation}
J_i(t)=-2 e^{-\sqrt{-1}\lambda_i t} \frac{\re a_{i,1}}{t}+G_i(t)
\end{equation}
where here $|G_i(t)|\leq \frac{10 C_0}{t^2}$.

The first term is a convergent (as $t_1\to \infty$) oscillating integral while the integral of $G_i(t)$ is absolutely convergent.  Thus, there exists
a $C_2$ depending only on $C_0$ (and in particular independent of
$t_1, i$) so, for $i_0\leq i\leq \infty$, on each $\Sigma_i$,
\begin{equation}
 |(x_1)_i(t_1)-(x_1)_i(3C)|+|(x_2)_i(t_1)-(x_2)_i(3C)|\leq C_2.
\end{equation}
Recall the translated surfaces $\Sigma_i$ have genus centered at
zero. Since one finds the axis by letting $t_1 \to \infty$, the
above bound shows each translated surface has axis a uniform
distance from the origin. Thus, the original center for each genus,
$p_i$, satisfies the desired uniform bound.
\end{proof}

The previous lemma tells us that for any sequence of surfaces in
$\mathcal{E}(1,1)$, asymptotic to a fixed helicoid, there is a
sub-sequence with the scale of the genus uniformly controlled and
the center of the genus lying inside of a cylinder.  With this
information, we can now show Theorem \ref{MainCpctnessThm}:
\begin{proof}
By the previous lemma, we know that there exists an $R>0$ and a
sub-sequence $\Sigma_i$ such that $r(\Sigma_i) \to r_\infty >0$ and
the genus of these $\Sigma_i$ is centered in a cylinder of radius
$R$. Translate the surface parallel to the axis of the cylinder by
some $v_i$ so the genus is centered in $B_{R}(0)$. Use Theorem
\ref{CpctnessCorg1} to show that there is a sub-sequence so
$\Sigma_i-v_i \to \Sigma_\infty \in \mathcal{E}(1,1)$, which is
asymptotic to some helicoid.  The integral \eqref{ScaleIntegral}
implies this helicoid is $H$.  If $\lim_{i\to \infty} |v_i|<\infty$
then $\Sigma_\infty \in \mathcal{E}(1,1)$, and if $\lim_{i\to
\infty}|v_i|=\infty$ then $\Sigma_\infty = H$.
\end{proof}

\subsection{Geometric Structure of $\mathcal{E}(1,1,R)$} \label{consequencesection}
Theorem 1.1 of \cite{BB1} gave an effective description of disks
with large curvature by comparing them with helicoids. Theorem
\ref{CpctnessCorg1} allows us to do the same for elements
$\mathcal{E}(1,1,R)$.
\begin{thm}\label{effresult}
Given $\epsilon>0$ and $R\geq 1$ there exists an
$R'=R'(\epsilon,R)\geq R$ so that: if $\Sigma\in
\mathcal{E}(1,1,R')$ with $r(\Sigma)=1$ and the genus of $\Sigma$ is
centered at $0$, then the component of $B_{R}(0)\cap \Sigma$
containing the genus is bi-Lipschitz with a subset of an element of
$\mathcal{E}(1,1)$ and the Lipschitz constant is in
$(1-\epsilon,1+\epsilon)$.
\end{thm}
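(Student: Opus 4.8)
The plan is to argue by contradiction, extracting a convergent sequence and then converting the smooth convergence supplied by Theorem~\ref{CpctnessCorg1} into the desired bi-Lipschitz comparison. Suppose the statement failed for some fixed $\epsilon>0$ and $R\geq 1$. Then for every $j$ we could find a radius $R'_j\geq j$ and a surface $\Sigma_j\in\mathcal{E}(1,1,R'_j)$ with $r(\Sigma_j)=1$ and genus centered at $0$, yet so that the component $\Omega_j$ of $B_R(0)\cap\Sigma_j$ containing the genus admits no bi-Lipschitz map onto a subset of an element of $\mathcal{E}(1,1)$ with Lipschitz constant in $(1-\epsilon,1+\epsilon)$. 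Since $g=1$ gives $r_-(\Sigma_j)=r_+(\Sigma_j)=r(\Sigma_j)=1$, the sequence satisfies the hypotheses of Theorem~\ref{CpctnessCorg1} with $\alpha=1$ and $R'_j\to\infty$. Passing to a subsequence, the $\Sigma_j$ therefore converge uniformly in $C^\infty$ on compact subsets of $\Real^3$, with multiplicity one, to some $\Sigma_\infty\in\mathcal{E}(1,1)$ with $r(\Sigma_\infty)=1$.

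Next I would extract the bi-Lipschitz map directly from the multiplicity-one smooth convergence. On a fixed compact neighborhood, say $\overline{B}_{R+1}(0)$, multiplicity-one $C^\infty$ convergence means that for large $j$ the surface $\Sigma_j$ is, on this set, a normal graph over $\Sigma_\infty$: there are functions $u_j$ defined on a subdomain of $\Sigma_\infty$ with $u_j\to 0$ in $C^\infty$, so that the map $\Phi_j(x)=x+u_j(x)\mathbf{n}(x)$ (with $\mathbf{n}$ the unit normal to $\Sigma_\infty$) parametrizes $\Sigma_j$ over $\Sigma_\infty$. Because the differential of a normal graph map over a smooth surface depends continuously on $(u_j,\nabla u_j)$ and reduces to the identity when $u_j\equiv 0$, the bi-Lipschitz constants of $\Phi_j$ and $\Phi_j^{-1}$ tend to $1$ as $u_j\to 0$ in $C^1$. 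Restricting $\Phi_j^{-1}$ to the component $\Omega_j$ produces a bi-Lipschitz map onto a subset of $\Sigma_\infty\in\mathcal{E}(1,1)$ whose Lipschitz constant lies in $(1-\epsilon,1+\epsilon)$ once $j$ is large. This contradicts the choice of the $\Sigma_j$ and proves the theorem.

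The main obstacle is the bookkeeping of components, namely ensuring that $\Omega_j$ is genuinely captured as a normal graph over the piece of $\Sigma_\infty$ containing its genus, and controlling the behavior near $\partial B_R(0)$. I would handle this by first noting that, since the genus is centered at $0$ with $r(\Sigma_j)=1\leq R$, the handle of each $\Sigma_j$ lies well inside $B_R(0)$; choosing the ambient sphere $\partial B_R(0)$ to meet $\Sigma_\infty$ transversely, the smooth convergence guarantees $\partial B_R(0)$ meets $\Sigma_j$ transversely for large $j$ and that the intersection curves converge to $\partial B_R(0)\cap\Sigma_\infty$. Consequently the component of $B_R(0)\cap\Sigma_j$ containing the genus corresponds, under $\Phi_j$, to the component of $B_R(0)\cap\Sigma_\infty$ containing the genus. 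The flexibility afforded by the phrase ``a subset of an element of $\mathcal{E}(1,1)$'' means we need not match the domains exactly: taking the target to be precisely $\Phi_j^{-1}(\Omega_j)\subset\Sigma_\infty$ suffices, so the only genuine analytic content is the passage from $C^1$-small normal graphs to Lipschitz constants near $1$, which is routine.
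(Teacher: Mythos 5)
Your proposal is correct and takes essentially the same route as the paper: the paper likewise argues by contradiction, invokes Theorem \ref{CpctnessCorg1} to get multiplicity-one smooth convergence to some $\Sigma_\infty\in\mathcal{E}(1,1)$, and then represents the relevant component of each $\Sigma_j$ as a normal graph $\nu_j$ over a subset of $\Sigma_\infty$ with $C|\nu_j|+|\nabla_{\Sigma_\infty}\nu_j|<\epsilon$ (where $C=\sup_{\Sigma_\infty}|A|<\infty$), which is precisely your passage from $C^1$-small normal graphs to bi-Lipschitz constants in $(1-\epsilon,1+\epsilon)$. The only difference is cosmetic: the paper outsources the component bookkeeping and the graph-to-Lipschitz step to Section 6 of \cite{BB1}, whereas you spell these details out.
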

\begin{proof}
The proof follows by contradiction exactly as in the proof of
Theorem 1.1 of \cite{BB1} with two very minor differences.  First,
Theorem 1.3 of this paper replaces the convergence to
$\Sigma_\infty$ outlined in that proof. Second, by Theorem 1.1 of
\cite{BB2}, $C=\sup_{\Sigma_{\infty}}|A|<\infty$ and so for any
$\epsilon$, we find a smooth $\nu_i$ defined on a subset of
$\Sigma_\infty$ so that $C|\nu_i| + |\nabla_{\Sigma_\infty}
\nu_i|<\epsilon$ and the graph of $\nu_i$ is the component of
$\Sigma_i \cap B_{R'}$ containing the genus.  For details, see
Section 6 of \cite{BB1}.
\end{proof}
\appendix
\section{The lamination theory of \cite{CM5}} \label{CM5Sec}
In \cite{CM5}, Colding and Minicozzi generalize their lamination
theory for minimal disks -- i.e. Theorem 0.1 of \cite{CM4} -- to
arbitrary sequences of minimal surfaces of fixed, finite genus.  To
do so, they must allow for a more general class of singular models,
as is clear from considering a rescaling of the catenoid.  Surfaces
modeled on the neck of a catenoid form an important class, one that
is characterized by having genus zero and disconnected boundary.
Another important class, especially for our purposes, are surfaces
of finite genus and connected boundary.  This second class of
surfaces, because they have connected boundary, turn out to be
structurally very similar to disks. Indeed, most of the results of
\cite{CM1,CM2,CM3,CM4} hold for them (in a suitably form) and with
only slight modifications of the proofs; this has important
ramifications for lamination theory.

We first briefly discuss the most general lamination result of
\cite{CM5} -- Theorem 0.14.  Suppose $\Sigma_i$ is a sequence of
embedded minimal surfaces with fixed, finite genus and $\partial
\Sigma_i \subset B_{R_i}$ with $R_i\to \infty$.  Colding and
Minicozzi show that if the curvature of the sequence blows up at a
point $y \in \Real^3$ (i.e. if for all $r>0$, $\sup_i
\sup_{B_r(y)\cap \Sigma_i} |A|^2=\infty$), then after a rotation, a
sub-sequence $\Sigma_i$ converge to the singular lamination
$\mathcal{L}\backslash \mathcal{S}$ in the $C^\alpha$ topology
($\alpha \in (0,1)$) and the curvature blows precisely at points of
$\mathcal{S}$.  Here $\mathcal{L}=\{x_3=t\}_{t \in \mathcal{I}}$,
$\set{x_3(y):y\in \mathcal{S}}=\mathcal{I}$ a closed subset of
$\Real^3$.  Notice little global information about the limit
lamination is given; this contrasts with the case for disks where
$\mathcal{I}=\Real$ and $\mathcal{S}$ is a Lipschitz graph over the
$x_3$-axis -- in fact a line.

More generally, the topology of the sequence can restrict
$\mathcal{I}$ and give more information about convergence near
$\mathcal{S}$ and the structure of $\mathcal{S}$. To make this
precise, we distinguish between two types of singular points $
y\in\mathcal{S}$. Heuristically, the distinction is between points
where
on
small scales near the point all the $\Sigma_i$ are disks and points
where
on small scales near the point all the $\Sigma_i$ contain necks.
This is the exact description if the genus of the surfaces is zero,
but must be
 refined for sequences with positive genus.  Following \cite{CM5}, we
make this precise for a sequence $\Sigma_i$  converging to a
lamination $\mathcal{L}$ with singular set $\mathcal{S}$:
\begin{defn}\label{ULSCdef}
 We say $y\in \mathcal{S}$ is an element of $\mathcal{S}_{ulsc}$ if there
exist both $r>0$ fixed and a sequence $r_i \to 0$ such that $B_r(y)
\cap \Sigma_i$ and $B_{r_i}(y) \cap \Sigma_i$ have the same genus
and every component of $B_{r_i}(y)\cap \Sigma_i$ has connected
boundary.
\end{defn}
\begin{defn}\label{NeckDef}
 We say $y\in \mathcal{S}$ is an element of $\mathcal{S}_{neck}$ if there
exist both $r>0$ fixed and a sequence $r_i \to 0$ such that $B_r(y)
\cap \Sigma_i$ and $B_{r_i}(y) \cap \Sigma_i$ have the same genus
and $B_{r_i}(y)\cap
\Sigma_i$ has at least one component with disconnected boundary.
\end{defn}
For instance, if the $\Sigma_i$ are the homothetic blow-down of a
helicoid or genus-one helicoid, then $0$ is an element of
$\mathcal{S}_{ulsc}$, whereas if the $\Sigma_i$ are the homothetic
blow-down of a catenoid then $0$ is an element of
$\mathcal{S}_{neck}$. Colding and Minicozzi show that near a point
of $\mathcal{S}_{ulsc}$ or $\mathcal{S}_{neck}$ these examples model
the behavior of the convergence.

A major result of \cite{CM5} refines the general compactness theorem
based on more careful analysis of the topology of the sequence. One
important example is the no-mixing theorem (i.e. Theorem 0.4 of
\cite{CM5}), which states that, up to a passing to a sub-sequence,
either $\mathcal{S}=\mathcal{S}_{ulsc}$ or
$\mathcal{S}=\mathcal{S}_{neck}$. This is particularly important as
ULSC sequences (i.e. sequences where
$\mathcal{S}=\mathcal{S}_{ulsc}$) behave much like disks. Indeed,
Theorem 0.9 of \cite{CM5} states:
\begin{thm} \label{CM509Thm}
Let $\Sigma_i \subset B_{R_i} = B_{R_i}(0) \subset \Real^3$ be a
sequence of compact embedded minimal surfaces of fixed genus with
$\partial \Sigma_i \subset \partial B_{R_i}$ where $R_i \to \infty$.
 Assume the sequence $\Sigma_i$ is ULSC. If $\sup_{B_1 \cap \Sigma_i} |A|^2 \to \infty$, then there exists
a sub-sequence $\Sigma_j$, a foliation $\mathcal{L}=\{x_3=t\}_t$ of
$\Real^3$ by parallel planes, and a closed nonempty set
$\mathcal{S}$ in the union of the leaves of $\mathcal{L}$ such that
after a rotation of $\Real^3$:
\begin{enumerate}
\item For each $\alpha\in (0,1)$, $\Sigma_j \backslash \mathcal{S}$
converges in the $C^\alpha$-topology to
$\mathcal{L}\backslash \mathcal{S}$.
\item $\sup_{B_r(x)\cap \Sigma_j}|A|^2 \to \infty$ as $j \to \infty$
for all $r>0$ and $x \in \mathcal{S}$.

\item $\mathcal{S}$ is orthogonal to the
leaves of the lamination and is
either the union of two disjoint lines $\mathcal{S}_1$ and
$\mathcal{S}_2$, or is a single line.
\item \label{twosing}Away from $\mathcal{S}$, each $\Sigma_j$
consists of exactly two multi-valued graphs spiraling together. Near $\mathcal{S}$ the pair of multi-valued graphs
form double spiral stair-cases with opposite orientations.  Thus, circling only
one component of $\mathcal{S}$ results in going either up
or down.  If $\mathcal{S}=\mathcal{S}_1\cup \mathcal{S}_2$ then the multi-valued graphs are glued so that a path circling both $\mathcal{S}_1$ and
$\mathcal{S}_2$ closes up.
\end{enumerate}
\end{thm}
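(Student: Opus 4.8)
The plan is to bootstrap from the weaker, general lamination result already available — Theorem 0.14 of \cite{CM5} — and then to use the ULSC hypothesis to promote its conclusions to the rigid picture claimed here. First I would apply Theorem 0.14: since the curvature blows up in $B_1$, after a rotation a sub-sequence converges in $C^\alpha$ on compact subsets of $\Real^3$, away from a closed set $\mathcal{S}$, to a lamination $\mathcal{L}=\set{x_3=t}_{t\in\mathcal{I}}$ by parallel planes, with $|A|^2\to\infty$ precisely along $\mathcal{S}$ and $\mathcal{S}$ contained in the union of the leaves. This yields items (1) and (2) immediately (with $\mathcal{I}$ a priori only a closed subset of $\Real$) and reduces the theorem to analyzing the fine structure of $\mathcal{S}$ and to upgrading $\mathcal{I}$ to all of $\Real$.

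Next, since the sequence is ULSC, the no-mixing theorem (Theorem 0.4 of \cite{CM5}) lets me pass to a further sub-sequence with $\mathcal{S}=\mathcal{S}_{ulsc}$. The value of the ULSC condition is that on small extrinsic balls about a point of $\mathcal{S}_{ulsc}$ the relevant components have connected boundary and locally constant genus, so they are structurally indistinguishable from embedded minimal disks. I would therefore localize near each $y\in\mathcal{S}$ and apply the disk lamination theory, Theorem 0.1 of \cite{CM4}, to these components. This produces the local model: away from $\mathcal{S}$ each $\Sigma_j$ is a pair of multi-valued graphs spiralling together, and near $\mathcal{S}$ they form a double spiral staircase — that is, item (4) holds locally — and it forces $\mathcal{S}$ to be locally a single line orthogonal to the leaves.

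The heart of the matter is then global. I would use the one-sided curvature estimate, Theorem 0.2 of \cite{CM4}, to propagate the local structure: a sheet accumulating to a plane $\set{x_3=t_0}$ from one side forces the neighbouring graphs to continue past it, so no gap can open in $\mathcal{I}$ and no component of $\mathcal{S}$ can terminate; hence $\mathcal{I}=\Real$, $\mathcal{L}$ is the full foliation, and each component of $\mathcal{S}$ is a complete line (item (3)). The same estimate, together with embeddedness and the finiteness of the genus, is what I would use both to bound the number of singular lines and to patch the separately-obtained staircases into one coherent picture: two disjoint lines can be glued so that a loop encircling both closes up, while a loop around a single line must go strictly up or down. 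I expect the principal obstacle to be exactly this global rigidity — proving that $\mathcal{S}$ is \emph{globally} one or two straight lines rather than a more complicated closed set, and showing that the local double-staircase pieces near distinct singular points fit together consistently. This dichotomy between the one-line and two-line cases, and the exclusion of a third line, is where the interplay among the one-sided curvature estimate, embeddedness, and the ULSC hypothesis is most delicate.
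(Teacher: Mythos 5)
The first thing to note is that the paper contains no proof of this statement: Theorem \ref{CM509Thm} is a verbatim summary of Theorem 0.9 of \cite{CM5}, stated in Appendix \ref{CM5Sec} precisely so that it can be invoked as a black box elsewhere (in Lemmas \ref{boundedcurvlem}, \ref{GeoInThinTubeLem} and \ref{injectivelb}). There is therefore no ``paper's own proof'' to compare against; what you have written is an attempted reconstruction of Colding--Minicozzi's argument, and it has to be judged on those terms.

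As such a reconstruction it has the right flavor --- ULSC sequences behave locally like disks, so the disk theory of \cite{CM1,CM2,CM3,CM4} should drive the analysis --- but the pivotal step is genuinely flawed. You propose to ``localize near each $y\in\mathcal{S}$ and apply the disk lamination theory, Theorem 0.1 of \cite{CM4}, to these components.'' That theorem requires disks whose boundaries lie in $\partial B_{R_i}$ with $R_i\to\infty$; after localizing, the components of $B_r(y)\cap\Sigma_i$ have boundary on the \emph{fixed} sphere $\partial B_r(y)$, and in that setting the conclusion is false: Colding and Minicozzi themselves (and later others) constructed sequences of embedded minimal disks in a fixed ball, with boundary on the boundary sphere, whose curvature blows up only at one interior point and whose limit is not a foliation with a line singularity --- the singular set can be a single point. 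So the global hypothesis $R_i\to\infty$ must enter the local analysis in an essential way; in \cite{CM5} this is done by rerunning the machinery of \cite{CM3,CM4} (formation of pairs of multi-valued graphs, one-sided curvature estimates, chord-arc bounds) for surfaces with connected boundary, not by citing the disk theorem. Note also that ULSC components need not be disks at all --- they have connected boundary but may carry the genus --- and extending the disk machinery to this class is itself a substantial part of \cite{CM5}. Two further points: your bootstrap direction risks circularity, since in \cite{CM5} the general Theorem 0.14 is obtained by combining the no-mixing theorem with the separate analyses of the ULSC and neck cases, so it cannot simply be taken as an independent input to prove the ULSC theorem; and once the sequence is \emph{assumed} ULSC, every singular point lies in $\mathcal{S}_{ulsc}$ essentially by definition, so your appeal to the no-mixing theorem is redundant. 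Your instinct that the global rigidity of $\mathcal{S}$ (straight lines, at most two, consistent gluing of the staircases) is the hard part is correct, but as written the proposal defers all of it to a local citation that cannot bear the weight.
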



\begin{rem}\label{twosingremark}The situation with two singular sets is modeled on the degeneration of the Riemann examples.  As such the limit can be thought of as having ``infinite'' topology.  This is made rigorous by noting that for $R>0$ large enough so $B_{R/2}(0)$ intersects both singular
lines, then \eqref{twosing} implies that for any $f\in \mathbb{Z}^+$ for $i$ sufficiently large $B_R(0)\cap \Sigma_i$ has a component with at least $f$ boundary components.
\end{rem}

\bibliographystyle{amsplain}
\bibliography{thesisbib}

\end{document}